\theoremstyle{plain}
\newtheorem{thm}{Theorem}[section]
\newtheorem{prop}[thm]{Proposition}
\theoremstyle{definition}
\theoremstyle{remark}
\newtheorem{rem}[thm]{Remark}
\numberwithin{equation}{section}
\newcommand{\R}{\mathbb{R}}
\newcommand{\average}{{\mathchoice {\kern1ex\vcenter{\hrule height.4pt
width 6pt depth0pt} \kern-9.7pt} {\kern1ex\vcenter{\hrule
height.4pt width 4.3pt depth0pt} \kern-7pt} {} {} }}
\def\R{\mathbb{R}}
\begin{document}

\title[The two membranes problem for fully nonlinear operators]{The two membranes problem for fully nonlinear operators}

\author{Luis Caffarelli}

\address{University of Texas at Austin, Department of Mathematics, 2515 Speedway, TX 78712 Austin, USA}
\email{caffarel@math.utexas.edu}

\author{Luis Duque}

\address{University of Texas at Austin, Department of Mathematics, 2515 Speedway, TX 78712 Austin, USA}

\email{lduque@math.utexas.edu}

\author{Hern\'an Vivas}

\address{Departamento de Matem\'atica, FCEyN, Universidad Nacional de Mar del Plata, Conicet, Dean Funes 3350, 7600 Mar del Plata, Argentina}
\email{hvivas@gmail.com}

\keywords{Free boundary problems, fully nonlinear}

\begin{abstract}

We study the two membranes problem for two different fully nonlinear operators. We give a viscosity formulation for the problem and prove existence of solutions. Then we 
prove a general regularity result and the optimal $C^{1,1}$ regularity when the operators are the Pucci extremal operators. We also give an example that shows that no 
regularity for the free boundary is to be expected to hold in general. 

\end{abstract}

\maketitle

\section{Introduction}
\label{sec.1}

The two membranes problem was first studied by Vergara-Caffarelli \cite{VC} in the context of variational inequalities to describe the equilibrium position of two elastic membranes in contact with each other that are not allowed to cross. He considered the 
linear elliptic case, in which the problem can be reduced to the classical obstacle problem by looking at the difference between the two functions representing the position of each membrane.  

Nearly 35 years later, Silvestre \cite{S} studied the problem for a nonlinear operator in divergence form. He obtained the optimal $C^{1,1}$ regularity of solutions together with a characterization of the regularity of the free boundary, 
that is the boundary of the set where the two functions coincide. The strategy in his proof was to show that the difference of the two functions satisfies an obstacle problem for the linearized operator, for which the regularity theory of 
the solutions and the free boundary are well known. An important remark is that in both of these cases the operator governing the behavior of each function is the same. 

In a recent paper, Caffarelli, De Silva and Savin \cite{CDS} considered the two membranes problem for (possibly nonlocal) different operators, i.e. they consider the case in which one of the membranes (say the lower one) satisfies an equation that has 
higher order with respect to the other one. Here, heuristically, the lower order operator can be treated as a perturbation and some regularity for the lower membrane is obtained. Regularity from the upper membrane can then be deduced by solving an obstacle problem
(with the lower membrane as obstacle) and obtaining estimates for solutions of nonlocal obstacle type problems in which the obstacle is not smooth. Repeating these arguments, the optimal regularity is achieved. 

We also point out that the problem has been studied by several authors in the general case of $N$ membranes, see \cite{CV}, \cite{CCVC}, \cite{ARS}. 

Here, motivated by a model from mathematical finance, we consider a version of the two membrane problem for two different fully nonlinear operators. It is worth pointing out that,
for the case of two different operators of the same order, the only result available (to the best of the authors' knowledge) is the H\"older regularity obtained in \cite{CDS} (see the Introduction
there for a discussion of the difficulties of this problem). In this paper we prove $C^{1,\alpha}$ regularity of the solution pair for (concave or convex) operators satisfying a sort of 
compatibility condition (see \eqref{eq.comp} below) and $C^{1,1}$ regularity for the case of the Pucci extremal operators, which is optimal. Moreover, we give an explicit example 
that shows that no regularity can be expected to hold for the free boundary in general.

\subsection{Notation and preliminaries}

Throughout this paper the ellipticity constants $\lambda,\Lambda\in\R$ will be fixed and will satisfy $0<\lambda<\Lambda$. Given these, we denote by $\mathcal{M}^+$ and $\mathcal{M}^-$ the Pucci extremal operators with respect to the class of 
symmetric matrices whose eigenvalues lie between $\lambda$ and $\Lambda$, that is for any symmetric matrix $X$

\begin{equation*}
 \mathcal{M}^+(X)=\sup_{A\in\mathcal{L}_{\lambda,\Lambda}}\textrm{tr}(AX)\quad\quad\textrm{and}\quad\quad\mathcal{M}^-(X)=\inf_{A\in\mathcal{L}_{\lambda,\Lambda}}\textrm{tr}(AX)
\end{equation*}
where
\[
 \mathcal{L}_{\lambda,\Lambda}=\{A\in\R^{n\times n}:A\textrm{ is symmetric and } \lambda Id\leq A \leq\Lambda Id\}
\]
$X\geq Y$ meaning as usual that $X-Y$ is a positive semidefinite matrix. 

Recall that an operator $F:\R^{n\times n}\rightarrow\R$ is said to be uniformly elliptic with repect to the class $\mathcal{L}_{\lambda,\Lambda}$ if it satisfies
\begin{equation}\label{eq.ellip}
 \mathcal{M}^-(X-Y)\leq F(X)-F(Y)\leq  \mathcal{M}^+(X-Y)
\end{equation}
for any pair of symmetric matrices $X$ and $Y$. 

We will assume without loss of generality that $F(0)=0$. A useful remark that follows from \eqref{eq.ellip} is that if $u$ is a function satisfying $F(D^2u)=f$ then 
\begin{equation} \label{eq.diri}
  \left\{ \begin{array}{rcl}
  \mathcal{M}^+(D^2u) & \geq & f \\
  \mathcal{M}^-(D^2u) & \leq & f.
\end{array}\right.
\end{equation}
In particular, $u$ is a subsolution and a supersolution of two (possibly different) elliptic equations with bounded measurable coefficients. 

\subsection{The two membranes problem for fully nonlinear operators}

The problem we will consider is the following: given two functions $u_0,v_0\in C^\gamma(\partial B_1)$ and $f,g\in C^\gamma(B_1)$ for some $\gamma\in(0,1)$, we want to study the solutions $u$ and $v$ of

\begin{equation}\label{eq.main}
  \left\{ \begin{array}{rcll}
  u & \geq & v & \textrm{ in } B_1 \\
  F(D^2u) & \leq & f & \textrm{ in } B_1 \\
  G(D^2v) & \geq & g & \textrm{ in } B_1 \\
  F(D^2u) & = & f & \textrm{ in } B_1\cap\Omega \\
  G(D^2v) & = & g & \textrm{ in } B_1\cap\Omega \\
  u & = & u_0 & \textrm{ on } \partial B_1 \\ 
  v & = & v_0 & \textrm{ on } \partial B_1 
\end{array}\right.
\end{equation}
where
\[
\Omega:=\{u>v\},
\]
$F$ is convex and 
\begin{equation}\label{eq.comp}
G(X)=-F(-X).  
\end{equation}
Note that $G$ thus defined will be concave and that particular examples are
\begin{equation*}
F(D^2w)=\sup_{\alpha\in \Sigma}\:\textrm{tr}(A_\alpha D^2 w) \quad\quad\textrm{and}\quad\quad G(D^2w)=\inf_{\alpha\in \Sigma}\:\textrm{tr}(A_\alpha D^2 w) 
\end{equation*}
with $\Sigma$ some set of indexes and $A_\alpha\in\mathcal{L}_{\lambda,\Lambda}$ for every $\alpha$. If $A_\alpha$ can be any matrix in $\mathcal{L}_{\lambda,\Lambda}$ then $F=\mathcal{M}^+$ and $G=\mathcal{M}^-$. It is in this latter case that we prove the optimal regularity. Note that the strict inequality assumed for the ellipticity constants avoids these operators to become just a multiple of the Laplacian.  

Equation \eqref{eq.main} is to be understood in the viscosity sense. More precisely: if $\varphi$ is a $C^2$ function in $B_1$ satisfying for some $x_0\in B_1$
\begin{equation*}
 \varphi(x)\leq u(x) \quad\textrm{in}\quad B_1,\quad\quad \varphi(x_0)=u(x_0)
\end{equation*}
(i.e. $\varphi$ \emph{touches $u$ by below} at $x_0$) then  
\begin{equation*}
F(D^2\varphi(x_0))\leq f(x_0).
\end{equation*}

Similarly if $\varphi$ touches $v$ by above and of course the opposite inequalities (last two equations in \eqref{eq.main})  hold if $\varphi$ touches $u$ by above 
(or $v$ by below) in $\Omega$. A simple remark that will be useful is that it is equivalent to use paraboloids instead of general $C^2$ functions.

Note that the convexity of $F$ as well as the H\"older regularity for $f$ and $g$ are natural assumptions if we want to get optimal regularity. In fact, one expect solutions to this problem to be $C^{1,1}$ as long as the equation they solve on the noncontact
set is ``good enough'', meaning that we have at least $C^{1,1}$ regularity for it. This, in principle, is not true in general if $F$ is not convex or $f$ and $g$ are merely bounded. Also, for the problem to make sense we will assume throughout the paper that $u_0>v_0$ on $\partial B_1$. Moreover, we will assume that $f-g\geq0$. Notice that if this was not the case the problem could loose interest and degenerate into just two independent fully nonlinear equations. Indeed, if $f-g<0$, we would have (see \eqref{eq.diri})
\begin{equation*}
  \left\{ \begin{array}{rcll}
  \mathcal{M}^+(D^2(v-u)) & > & 0 & \textrm{ in } B_1 \\
   v-u & < & 0 & \textrm{ in } \partial B_1 
\end{array}\right.
\end{equation*}
and due to the maximum principle $u>v$ in $B_1$. Then there is no contact set and we just have the respective equations for $u$ and $v$. 

Equation \eqref{eq.main} models a so called ``bid and ask'' situation in which we have an asset, a seller (represented by $u$) and a buyer (represented by $v$). The price 
of the asset is random and the transaction will only take place when $u$ and $v$ ``agree on a price'', i.e. when $u=v$. Moreover, we want to model the expected earnings of $u$ and $v$, assuming
that their strategy is optimal. 

One can think of this problem as having two different (although related) features: on one hand, we have an ``obstacle type'' situation, in which $u$ tries to maximize gain 
with $v$ being an obstacle and vice versa ($v$ minimizing cost and $u$ being an obstacle), hence the constraint $u\geq v$. But perhaps more interesting is the special relation
between $u$ and $v$. Because of the ``bid and ask'' nature of the model, the Bellman type equations that govern the behavior of our solutions are closely related (recall $F(X)=-G(-X)$)  and
it is precisely this feature which opens a way to get regularity even though the operators are different. 

The main result of this paper is the following:

\begin{thm}\label{thm.main}
Let $u$ and $v$ solve \eqref{eq.main} in the viscosity sense with $F=\mathcal{M}^+$ and $G=\mathcal{M}^-$. Then $u$ and $v$ are $C^{1,1}$ in $B_{1/4}$ and  
\[
 \|D^2u\|_{L^\infty(B_{1/4})}, \|D^2v\|_{L^\infty(B_{1/4})}\leq C
\]
where $C$ depends only on $n,\lambda,\Lambda,\|f\|_{C^\gamma(B_1)},\|g\|_{C^\gamma(B_1)},\|v\|_{L^\infty(B_1)}$ and $\|u\|_{L^\infty(B_1)}$.
\end{thm}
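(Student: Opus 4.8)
The plan is to use the conjugacy $G(X)=-F(-X)$ to identify the difference $w:=u-v$ as the solution of a single obstacle problem for $\mathcal{M}^+$, to invoke the optimal regularity for that problem, and then to transfer the bound back to $u$ and $v$ using the strict ellipticity $\lambda<\Lambda$. For the first step, since $\mathcal{M}^-(X)=-\mathcal{M}^+(-X)$, the inequality and equation for $v$ in \eqref{eq.main} are equivalent to: $-v$ is a viscosity supersolution of $\mathcal{M}^+(D^2(-v))=-g$ in $B_1$ which \emph{solves} $\mathcal{M}^+(D^2(-v))=-g$ in $\Omega$. Combining this with the corresponding statements for $u$ (right-hand side $f$), and using that $\mathcal{M}^+$ is positively $1$-homogeneous and subadditive — so that a sum of viscosity supersolutions is a supersolution of the summed equation — one gets $w\ge 0$ and $\mathcal{M}^+(D^2w)\le f-g$ in $B_1$; while in $\Omega$, where $u$ and $-v$ are classical $C^{2,\alpha}$ solutions (Evans--Krylov, as $\mathcal{M}^+$ is convex and $\mathcal{M}^-$ concave), the same subadditivity read as an equality gives $\mathcal{M}^+(D^2w)=f-g$. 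Thus $w$ solves
\begin{equation*}
\min\{\,(f-g)-\mathcal{M}^+(D^2w),\ w\,\}=0\quad\text{in }B_1,
\end{equation*}
the obstacle problem for $\mathcal{M}^+$ with zero obstacle, right-hand side $h:=f-g\in C^\gamma(B_1)$, $h\ge 0$, and coincidence set $B_1\setminus\Omega$. This reduction is the only place where the hypothesis $G(X)=-F(-X)$ is used, and it is what replaces the ``different orders'' mechanism of \cite{CDS}.

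The second step is to quote (or reprove) the optimal regularity of this obstacle problem: since $\mathcal{M}^+$ is convex and $h\in C^\gamma$, the solution is $C^{1,1}$ in $B_{1/2}$, with $\|D^2w\|_{L^\infty(B_{1/2})}\le C(n,\lambda,\Lambda,\|h\|_{C^\gamma(B_1)},\|w\|_{L^\infty(B_1)})$. The mechanism is the standard one: interior $C^{2,\alpha}$ estimates for $\mathcal{M}^+(D^2w)=h$ inside $\Omega$, plus quadratic growth $\sup_{B_r(x_0)}w\le Cr^2$ at a free boundary point $x_0$, where, by the a priori $C^{1,\alpha}$ regularity (obtained in the general regularity result above), $w(x_0)=0$ and $\nabla w(x_0)=0$. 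The quadratic growth follows from the usual dichotomy: if it failed, the blow-ups $w_j(x)=w(x_0+r_jx)/\sup_{B_{r_j}(x_0)}w$ would converge, by the a priori $C^{1,\alpha}$ (or $W^{2,p}$) estimate for obstacle solutions with bounded right-hand side, to a global $w_\infty\ge 0$ with $w_\infty(0)=0$, $\sup_{\bar B_1}w_\infty=1$, solving the obstacle problem with right-hand side $0$; but $\mathcal{M}^+(D^2w_\infty)\le 0$ forces $\lambda\,\Delta w_\infty\le 0$, so $w_\infty$ is superharmonic and the strong minimum principle gives $w_\infty\equiv 0$, a contradiction. I expect this step — together with the endpoint point in the first step, namely that $\mathcal{M}^+(D^2w)=f-g$ persists \emph{up to} $\partial\Omega$ (equivalently, that near the free boundary $D^2u$ and $-D^2v$ have compatible sign structure, so the subadditivity bound is attained there) — to be the main technical obstacle; everything else is soft.

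Finally, transfer the regularity to $u$ and $v$. Write $u=v+w$ with $\|D^2w\|_{L^\infty(B_{1/2})}\le C$. From $\mathcal{M}^+(D^2u)\le f$ and $\lambda\,\mathrm{tr}\le\mathcal{M}^+$ one gets $\Delta u\le f/\lambda$, and from $\mathcal{M}^-(D^2v)\ge g$ and $\mathcal{M}^-\le\Lambda\,\mathrm{tr}$ one gets $\Delta v\ge g/\Lambda$, both in the viscosity sense; since $w\in C^{1,1}$ and $\Delta u=\Delta v+\Delta w$, $\Delta u$ is two-sidedly bounded in $B_{1/4}$, hence $u\in W^{2,p}_{loc}(B_{1/4})$ for every $p<\infty$ by Calder\'on--Zygmund, so $u$ is twice differentiable a.e.\ and $u\in C^1(B_{1/4})$. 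At a.e.\ point of $B_{1/4}$ one then has, pointwise, $\mathcal{M}^+(D^2u)\le\|f\|_\infty$ from \eqref{eq.main}, and, since $u=v+w$ and $\mathcal{M}^-$ is superadditive, $\mathcal{M}^-(D^2u)\ge\mathcal{M}^-(D^2v)+\mathcal{M}^-(D^2w)\ge g-n\Lambda\|D^2w\|_\infty\ge -C$. Writing $X:=D^2u$ with positive and negative parts $X^{\pm}$, these two inequalities read $\Lambda\,\mathrm{tr}\,X^+-\lambda\,\mathrm{tr}\,X^-\le C$ and $\lambda\,\mathrm{tr}\,X^+-\Lambda\,\mathrm{tr}\,X^-\ge -C$; eliminating gives $\mathrm{tr}\,X^{\pm}\le C/(\Lambda-\lambda)$, i.e.\ $|D^2u|\le C/(\Lambda-\lambda)$ a.e. — this is exactly the point at which the strict inequality $\lambda<\Lambda$ is used (cf.\ the remark in the Introduction). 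Hence $\|D^2u\|_{L^\infty(B_{1/4})}\le C$, and likewise $\|D^2v\|_{L^\infty(B_{1/4})}\le\|D^2u\|_{L^\infty(B_{1/4})}+\|D^2w\|_{L^\infty(B_{1/4})}\le C$, with $C$ of the asserted dependence.
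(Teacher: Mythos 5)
Your reduction of the problem to a single obstacle problem for $w:=u-v$ does not work, and this is not merely a boundary-of-$\Omega$ technicality as you suspected: the equality $\mathcal{M}^+(D^2w)=f-g$ fails in the interior of $\Omega$. Subadditivity only gives
\[
\mathcal{M}^+(D^2w)=\mathcal{M}^+\bigl(D^2u+(-D^2v)\bigr)\le \mathcal{M}^+(D^2u)+\mathcal{M}^+(-D^2v)=\mathcal{M}^+(D^2u)-\mathcal{M}^-(D^2v)=f-g,
\]
with equality precisely when the same matrix $A\in\mathcal{L}_{\lambda,\Lambda}$ attains the $\sup$ for both $D^2u$ and $-D^2v$, i.e.\ when those two Hessians have compatible eigenstructure. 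In $\Omega$ nothing forces that. Section~\ref{sec.4} of the paper actually provides a counterexample: with $u(x,y)=x^2-y^2+Cx_+^3$ and $v(x,y)=x^2-y^2$ one has $w=Cx_+^3$, and in $\Omega=\{x>0\}$
\[
\mathcal{M}^+(D^2w)=6C\Lambda x \quad\text{while}\quad f-g=4(\Lambda-\lambda)+6C\Lambda x,
\]
so the obstacle-problem equation fails by the fixed amount $4(\Lambda-\lambda)>0$ throughout $\Omega$. Consequently $w$ is \emph{not} a solution of $\min\{(f-g)-\mathcal{M}^+(D^2w),w\}=0$, your blow-up argument for quadratic growth (which requires the limit $w_\infty$ to solve the homogeneous obstacle problem) has no foundation, and the final ``transfer back'' step, which assumes $\|D^2w\|_{L^\infty}\le C$ as its input, is left hanging. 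In short, the ``main technical obstacle'' you flagged is in fact a wall: the two membranes problem for two different operators does not reduce to an obstacle problem for the difference, which is precisely the reason the paper cannot follow Silvestre's one-operator strategy.

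The paper avoids $w$ altogether. It first proves a viscosity inequality $|F(D^2u)|\le C$ (the key observation being that if a test function touches $u$ from above at a contact point, it also touches $v$ from above, and $G\le F$ then gives the lower bound), which yields $W^{2,p}$ estimates for $u$ and $v$ by the Caffarelli--Cabr\'e Calder\'on--Zygmund theory (Proposition~\ref{prop.cz}). Then, on the contact set, it exploits the explicit eigenvalue formulas for $\mathcal{M}^\pm$ together with $u=v$ and $D^2u\ge D^2v$ to derive the pointwise a.e.\ identity
\[
(f-g)(x)\ge \mathcal{M}^+(D^2u(x))-\mathcal{M}^-(D^2v(x))\ge(\Lambda-\lambda)\sum|e_v|,
\]
giving a direct a.e.\ Hessian bound on $\Omega^c$ (Proposition~\ref{prop.contactset}); this is where $\lambda<\Lambda$ is used, playing the role your elimination step was meant to play, but applied to $u$ and $v$ rather than to $w$. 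Finally, the Indrei--Minne transfer theorem (Theorem~\ref{thm.indreimine}) upgrades the a.e.\ bound on the contact set, combined with the convex equation in $\Omega$, to a full $C^{1,1}$ bound. Note that your last step (converting $\mathcal{M}^+(D^2u)\le C$ and $\mathcal{M}^-(D^2u)\ge -C$ into $|D^2u|\le C/(\Lambda-\lambda)$) is essentially the same algebraic lemma, and is correct; it just cannot be reached through the path you chose.
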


\section{Existence}

In this section we prove existence of solutions for our problem. We use the method of penalization, i.e. we are going to consider a family of unconstrained ``penalized equations''
whose solutions are uniformly bounded in some H\"older space and hence convegent up to a subsequence. Then we are going to show that the limit of that subsequence is actually a
solution to \eqref{eq.main} (see \cite{KS}).

The penalized problem we are going to consider is the following: 
\begin{equation}\label{eq.penalepsilon}
  \left\{ \begin{array}{rcll}
  F(D^2u_\epsilon) & = & f+\beta_\epsilon(u_\epsilon-v_\epsilon)& \textrm{ in } B_1 \\
  G(D^2v_\epsilon) & = & g-\beta_\epsilon(u_\epsilon-v_\epsilon)& \textrm{ in } B_1 \\
  u_\epsilon & = & u_0  & \textrm{ in } \partial B_1\\
  v_\epsilon & = & v_0 & \textrm{ in } \partial B_1 
\end{array}\right.
\end{equation}
where for each $\epsilon >0$ we define
\begin{equation}\label{eq.betaepsilon}
\beta_{\epsilon}(t)=\beta(t/\epsilon)
\end{equation}
with $\beta:\R\rightarrow\R$ a smooth function satisfying 
\begin{equation}\label{eq.beta}
 -N\leq \beta\leq0,   \quad   
\beta'\geq0,   \quad   
\beta(t) = 0\textrm{ when }t\geq 1, \quad 
\beta(t) = -N \textrm{ when }t\leq 0, \quad 
\end{equation}
and
\begin{equation}\label{eq.ene}
N:= \|f-g\|_{L^\infty(B_1)}. 
\end{equation}

To get the existence and a priori bounds for solutions of \eqref{eq.penalepsilon} we will use a fixed point argument. Hence, we will need global regularity results for 
equations of the form 

\begin{equation} \label{eq.dirichlet}
  \left\{ \begin{array}{rcll}
  H(D^2u) & = & h & \textrm{ in } B_1 \\
  u & = & u_0  & \textrm{ in } \partial B_1 \\
\end{array}\right.
\end{equation}
where $H$ is a uniformly elliptic operator. Here we mostly follow Chapter 4 of \cite{CC} but since the proofs need to be modified slightly we sketch them below for completeness:

\begin{prop} \label{prop.growth}
Let $u$ be a viscosity solution of \eqref{eq.dirichlet} with $h\in L^\infty(B_1)$ and $u_0\in C^\gamma(\partial B_1)$. Then for any $x_0\in\partial B_1$ we have 
\begin{equation}\label{eq.growth}
\sup_{x\in B_1}\frac{|u(x)-u(x_0)|}{\vert x-x_0\vert^{\gamma/2}}\leq C 
\end{equation}
where $C$ is a constant depending only on $n,\lambda,\Lambda,\gamma,\|u_0\|_{C^\gamma(\partial B_1)}$ and $\|h\|_{L^\infty(B_1)}$.
\end{prop}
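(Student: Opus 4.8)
The plan is to use a barrier argument at a boundary point $x_0 \in \partial B_1$. First I would reduce to the case $x_0 = e_n$ (say) by rotation, and exploit the modulus of continuity of $u_0$: since $u_0 \in C^\gamma(\partial B_1)$, for every $x \in \partial B_1$ we have $|u_0(x) - u_0(x_0)| \leq \|u_0\|_{C^\gamma} |x - x_0|^\gamma$. The goal is to propagate a one-sided control of order $|x-x_0|^{\gamma/2}$ into the interior; doing it from both sides (with $\pm u$) then yields \eqref{eq.growth}. Because $u$ solves $H(D^2 u) = h$ with $H$ uniformly elliptic and $H(0)=0$, by \eqref{eq.diri} it is simultaneously a subsolution of $\mathcal{M}^+(D^2 u) \geq h$ and a supersolution of $\mathcal{M}^-(D^2 u) \leq h$, so it suffices to build barriers for the Pucci operators with a bounded right-hand side.

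The key step is the construction of the barrier. Working near $x_0 = e_n$, I would look for a function of the form
\[
w(x) = A |x - x_0|^{\gamma/2} + B\,(1 - |x|^2) + C_0 |x - x_0|^2
\]
or, more robustly, a radial function $\psi(|x - y_0|)$ centered at a point $y_0$ slightly outside $B_1$ along the inward normal, of the shape $\psi(r) = C_1 (r^{-p} - R^{-p})$ plus a quadratic correction to absorb $\|h\|_{L^\infty}$. Radial computations show $\mathcal{M}^-(D^2 \psi) \geq c(n,\lambda,\Lambda,p)\, r^{-p-2} > 0$ for $p = p(n,\lambda,\Lambda)$ large, so after adding a term like $-\tfrac{\|h\|_\infty}{2n\lambda}|x|^2$ one gets $\mathcal{M}^-(D^2 w) \leq h$ in $B_1$, i.e. $w$ is a supersolution of the relevant extremal equation. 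The free parameters are then tuned so that on $\partial B_1$ one has $w(x) \geq u_0(x) - u_0(x_0)$: this uses that on the sphere, near $x_0$, the barrier dominates $|x-x_0|^\gamma$ (because $\gamma/2 < \gamma$ gives the right comparison for $|x-x_0|$ small while boundedness of $u_0$ handles the region away from $x_0$), at the cost of a constant $C$ depending on the listed quantities. By the comparison principle for the Pucci extremal operator, $u(x) - u(x_0) \leq w(x) \leq C|x-x_0|^{\gamma/2}$ in $B_1$; applying the same argument to $-u$ (which is a viscosity solution of $\tilde H(D^2(-u)) = -h$ with $\tilde H(X) := -H(-X)$ uniformly elliptic) gives the matching lower bound, and together these give \eqref{eq.growth}.

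The main obstacle I anticipate is engineering a single barrier that simultaneously (i) is a supersolution of the extremal equation despite the nonzero $L^\infty$ right-hand side $h$, (ii) lies above the boundary data $u_0 - u_0(x_0)$ on all of $\partial B_1$ — both near $x_0$, where only the weaker exponent $\gamma/2$ is available and the $|x-x_0|^2$ terms must not spoil the inequality, and far from $x_0$, where $\|u_0\|_{L^\infty}$ must be dominated — and (iii) keeps all constants depending only on the allowed parameters. The loss from $\gamma$ to $\gamma/2$ is exactly what buys room for the quadratic barrier correction needed to kill $\|h\|_{L^\infty}$ while preserving strict sign of the Pucci operator; getting the geometry of the exterior ball and the scaling of the radial profile consistent is the delicate bookkeeping. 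This is the standard scheme of \cite[Chapter 4]{CC}, with the modification being precisely the sub-optimal $\gamma/2$ exponent tailored to merely bounded $h$.
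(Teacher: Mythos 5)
Your high-level scheme---build an upper barrier vanishing at $x_0$, apply the comparison principle, and repeat with $-u$---is exactly the paper's. The gap is in the barrier itself: both candidates you write down fail, and the problem is structural, not ``delicate bookkeeping.'' For the first, with $r=|x-x_0|$ and $\alpha=\gamma/2\in(0,1)$, the Hessian of $r^\alpha$ has one negative radial eigenvalue $\alpha(\alpha-1)r^{\alpha-2}$ and $n-1$ positive tangential eigenvalues $\alpha r^{\alpha-2}$, so
\[
\mathcal{M}^+(D^2 r^{\alpha}) = \alpha\bigl[\Lambda(n-1)-\lambda(1-\alpha)\bigr]\,r^{\alpha-2},
\]
which is \emph{positive} for $n\geq 2$ and blows up to $+\infty$ as $r\to 0$; no bounded quadratic correction can restore $\mathcal{M}^+(D^2w)\leq h$ near $x_0$. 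For the second, the Krylov--Safonov profile has two problems. You state $\mathcal{M}^-(D^2w)\leq h$, but since $u$ is a viscosity \emph{subsolution} of $\mathcal{M}^+(D^2u)\geq h$, the comparison $u\leq w$ requires the stronger $\mathcal{M}^+(D^2w)\leq h$ (and the step ``$\mathcal{M}^-(D^2\psi)\geq c>0$, subtract a quadratic, conclude $\mathcal{M}^-\leq h$'' is not a valid inequality). More fundamentally, the correctly oriented version $\rho^{-p}-|x-y_0|^{-p}$ with $\rho=|y_0-x_0|$, which is indeed a supersolution for $p$ large and vanishes at $x_0$, satisfies $|x-y_0|^2=\rho^2+(1+\rho)|x-x_0|^2$ on $\partial B_1$ and hence vanishes to \emph{second} order at $x_0$. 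It cannot dominate $|u_0(x)-u_0(x_0)|\lesssim|x-x_0|^\gamma$ near $x_0$ in one shot; one would need a dyadic iteration across scales, a substantially different (and longer) argument.

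The device the paper uses, and which your plan is missing, is to translate and rotate so that $B_1$ becomes $B_1(e_1)$ and $x_0=0$, and to take the \emph{one-dimensional} barrier $\psi(x)=Cx_1^{\gamma/2}$. Because $\psi$ depends only on $x_1$, $D^2\psi$ has a single nonzero (negative) eigenvalue, so
\[
\mathcal{M}^+(D^2\psi)=C\lambda\tfrac{\gamma}{2}\bigl(\tfrac{\gamma}{2}-1\bigr)x_1^{\gamma/2-2}<0,
\]
which only becomes \emph{more} negative as $x_1\to 0$, so $C$ can be chosen once to force $\mathcal{M}^+(D^2\psi)\leq-\|h\|_\infty$ on all of $B_1(e_1)$. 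And the geometric identity $|x|^2=2x_1$ on $\partial B_1(e_1)$ gives $|u_0(x)-u_0(0)|\leq[u_0]_{C^\gamma}(2x_1)^{\gamma/2}\leq\psi(x)$ on the boundary. In short: replacing $|x-x_0|$ by the linear quantity $(x-x_0)\cdot\nu$ (with $\nu$ the inward normal) before raising to the power $\gamma/2$ is what makes the Hessian one-signed and ties the boundary modulus exactly to the barrier profile. With that barrier, the comparison and the symmetric argument for $-\psi$ finish as you planned.
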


\begin{proof}
We translate, rotate and add a constant to $u$ so that it is defined on $B:=B_1(1,0,..,0)$, $x_0=(0,0,...,0)$ and $u(0)=0$. We want to show 
\begin{equation}\label{eq.growth2}
\sup_{x\in B_1}\frac{|u(x)|}{\vert x\vert^{\gamma/2}}\leq C 
\end{equation}
For this let us define the barrier $\psi(x)= C x_1^{\gamma/2}$ with $C$ a constant to be determined. Notice that
\[
|x|^\gamma=(x_1^2+x_2^2+...+x_n^2)^{\frac{\gamma}{2}}=(2x_1)^{\frac{\gamma}{2}}  
\]
on $\partial B$ and hence 
\[
u(x)=u_0(x)\leq [u_0]_{C^{\gamma}(\partial B_1)}|x|^\gamma\leq C x_1^{\gamma/2}=\psi(x)
\]
there. On the other hand, $\psi$ satisfies
\[
H(D^2\psi)\leq \mathcal{M}^+(D^2 \psi)=C\lambda\frac{\gamma}{2}(\frac{\gamma}{2}-1)x_1^{\frac{\gamma}{2}-2}\leq -\Vert h \Vert_{L^\infty(B_1)}
\]
in $B$ in the viscosity sense if we take $C$ large enough. From the maximum principle it follows that $u\leq \psi$ in $B$. 

Symmetrically, we see that 
$u \geq -\psi$ on $\partial B$ and $H(D^2(-\psi))\geq \Vert h \Vert_{L^\infty(B_1)}$ in $B$, so using the maximum principle again we get $u\geq-\psi$ and hence \eqref{eq.growth2}.
\end{proof}

Now we prove the global H\"older estimates.

\begin{prop} \label{prop.globalest}
Let $u$ be a viscosity solution of \eqref{eq.dirichlet} with $h\in L^\infty(B_1)$ and $u_0\in C^\gamma(\partial B_1)$. Then 
\begin{equation}\label{eq.globalest}
\Vert u \Vert_{C^\eta(B_1)} \leq C 
\end{equation}
where $C$ is a constant depending only on $n,\lambda,\Lambda,\|u_0\|_{C^\gamma(\partial B_1)}$ and $\|h\|_{L^\infty(B_1)}$ and $\eta\leq\gamma/2$.
\end{prop}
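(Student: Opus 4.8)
The plan is to combine the boundary growth estimate from Proposition \ref{prop.growth} with the interior Hölder estimate for viscosity solutions of uniformly elliptic equations with bounded right-hand side (Krylov--Safonov / Chapter 4 of \cite{CC}). The strategy is standard: interior regularity controls oscillation on balls well inside $B_1$, the boundary estimate controls oscillation near $\partial B_1$, and one glues the two together. Specifically, I would fix $x,y\in B_1$ and split into cases according to how their distance compares to their distance to the boundary.

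First I would recall that by \eqref{eq.diri} the function $u$ is simultaneously a subsolution of $\mathcal{M}^+(D^2u)\ge h$ and a supersolution of $\mathcal{M}^-(D^2u)\le h$ in $B_1$, so the interior Krylov--Safonov estimate applies: there exist $\eta_0\in(0,1)$ and $C$ depending only on $n,\lambda,\Lambda$ such that for any ball $B_{2r}(z)\subset B_1$,
\[
[u]_{C^{\eta_0}(B_r(z))}\le \frac{C}{r^{\eta_0}}\left(\|u\|_{L^\infty(B_{2r}(z))}+r^2\|h\|_{L^\infty(B_1)}\right).
\]
Proposition \ref{prop.growth} gives $\|u\|_{L^\infty(B_1)}\le \|u_0\|_{L^\infty(\partial B_1)}+C$, so the right-hand side is controlled by the stated constants. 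Set $\eta:=\min\{\eta_0,\gamma/2\}$.

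Next comes the gluing argument. Let $x,y\in B_1$ with, say, $d(x,\partial B_1)\le d(y,\partial B_1)=:d$, and let $\bar x,\bar y\in\partial B_1$ be nearest boundary points. If $|x-y|\ge d/2$: travel from $x$ to $\bar x$, then along $\partial B_1$ from $\bar x$ to $\bar y$, then from $\bar y$ to $y$; using \eqref{eq.growth} on the first and last legs (each of length $\lesssim|x-y|$) and the $C^\gamma$ bound on $u_0$ along the boundary (noting $|\bar x-\bar y|\lesssim|x-y|$), one gets $|u(x)-u(y)|\le C|x-y|^{\gamma/2}\le C|x-y|^\eta$. If $|x-y|<d/2$: then $B_{d/2}(x)\subset B_1$ and $y\in B_{d/2}(x)$, so the interior estimate on this ball gives $|u(x)-u(y)|\le C d^{-\eta_0}\|u\|_{L^\infty(B_1)}|x-y|^{\eta_0}$; but also $|x-y|^{\eta_0}\le (d/2)^{\eta_0-\eta}|x-y|^\eta$ when $\eta_0\ge\eta$, and since $d\le 2$ this absorbs the $d^{-\eta_0}$ factor up to a constant, yielding $|u(x)-u(y)|\le C|x-y|^\eta$. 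Combining the two cases with the $L^\infty$ bound gives \eqref{eq.globalest}.

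The main obstacle is bookkeeping in the gluing step: making sure the triangle-type path estimates are legitimate (in particular that $|\bar x - \bar y|$ is comparable to $|x-y|$ when $|x-y|\gtrsim d$, which uses that the domain is a ball, and that one may indeed move along $\partial B_1$ controlling $u$ by $[u_0]_{C^\gamma}$), and handling the exponent mismatch $\eta_0$ versus $\gamma/2$ cleanly so that the final exponent is $\eta=\min\{\eta_0,\gamma/2\}\le\gamma/2$ as claimed. None of this is deep — it is the classical barrier-plus-Krylov--Safonov argument of \cite[Ch.~4]{CC} — but one must track that every constant depends only on $n,\lambda,\Lambda,\|u_0\|_{C^\gamma(\partial B_1)}$ and $\|h\|_{L^\infty(B_1)}$, which it does since Proposition \ref{prop.growth} and the interior estimate both have this dependence.
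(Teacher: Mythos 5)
Your overall strategy — barrier at the boundary plus interior Krylov--Safonov, glued according to whether $|x-y|$ is small or large compared with the distance to $\partial B_1$ — is exactly the approach the paper takes, and your handling of the far case ($|x-y|\gtrsim d$) is fine. But the near case as written has a genuine gap.

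In that case you apply the interior estimate on $B_{d/2}(x)$ and bound its $L^\infty$ term by $\|u\|_{L^\infty(B_1)}$, getting
\[
|u(x)-u(y)| \le C\, d^{-\eta_0}\,\|u\|_{L^\infty(B_1)}\,|x-y|^{\eta_0}.
\]
After the exponent reduction $|x-y|^{\eta_0}\le (d/2)^{\eta_0-\eta}|x-y|^\eta$, you are left with
\[
|u(x)-u(y)| \le C\, d^{-\eta_0}(d/2)^{\eta_0-\eta}|x-y|^\eta = C\,2^{\eta-\eta_0}\,d^{-\eta}\,|x-y|^\eta,
\]
and $d^{-\eta}$ is \emph{not} controlled by $d\le 2$; it blows up as the pair approaches the boundary, which is precisely the regime you must handle. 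The claim that ``$d\le 2$ absorbs the $d^{-\eta_0}$ factor'' is therefore incorrect.

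The missing idea — and the crux of the paper's proof — is to subtract a boundary value before applying the interior estimate. Let $\bar y\in\partial B_1$ be a nearest boundary point to $y$. Every $z\in B_{d/2}(x)$ satisfies $|z-\bar y|\le C d$, so Proposition \ref{prop.growth} gives $\|u-u(\bar y)\|_{L^\infty(B_{d/2}(x))}\le C d^{\gamma/2}$. Applying the rescaled interior estimate to $u-u(\bar y)$ then yields
\[
|u(x)-u(y)| \le C\, d^{-\eta_0}\Bigl(d^{\gamma/2}+d^{2}\|h\|_{L^\infty}\Bigr)|x-y|^{\eta_0} \le C\, d^{\gamma/2-\eta_0}\,|x-y|^{\eta_0},
\]
and after your exponent reduction this becomes $C\, d^{\gamma/2-\eta}|x-y|^\eta$, which is bounded because $\eta\le\gamma/2$ and $d\le 2$. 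That extra factor $d^{\gamma/2}$, which is only available after centering at the boundary value, is what cancels the $d^{-\eta}$ and closes the gluing.
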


\begin{proof}
We start by recalling that by interior estimates (Proposition 4.10 in \cite{CC}) solutions of \eqref{eq.dirichlet} are in $C^\alpha_{loc}(B_1)$ for some $\alpha>0$. Let $\eta=\min\{\gamma/2,\alpha\}$, $x_1, x_2\in B_1$
$r=\vert x_1 - x_2\vert$, and take $x_1',x_2'\in\partial B_1$ such that 
\[
d_1:= d(x_1, \partial B_1)= \vert x_1 - x_1'\vert\quad\textrm{ and }\quad d_2:= d(x_2, \partial B_1)=\vert x_2 - x_2'\vert.
\]
We assume without loss of generality that $d_2\leq d_1$ and we want to show that 
\begin{equation}\label{eq.holder}
\vert u(x_1)-u(x_2)\vert \leq C\vert x_1-x_2 \vert^\eta.
\end{equation}

We split the proof in two cases:

\textbf{Case 1:} When $r \leq \frac{d_1}{2}$, from the rescaled version of the interior estimates applied to $u-u(x_1')$ on $B_{d_1}(x_1)$ and our estimates at the boundary \eqref{eq.growth} we have

\begin{equation*} 
\begin{split}
d_1^\alpha \Vert u-u(x_1') \Vert_{C^\alpha(B_{d_1/2}(x_1))} & \leq C ( d_1^2 \Vert h \Vert_{L^\infty(B_{d_1}(x_1))}+ \Vert u-u(x_1') \Vert_{L^\infty(B_{d_1}(x_1))}) \\
 & \leq C  d_1^2\Vert h \Vert_{L^\infty(B_{d_1})}  + Cd_1^{\gamma/2} \leq C d_1^{\gamma/2}.
\end{split}
\end{equation*}
On the other hand
\[
d_1^\eta \frac{|u(x_1)-u(x_2)|}{|x_1-x_2|^\eta}\leq d_1^\alpha \frac{|u(x_1)-u(x_2)|}{|x_1-x_2|^\alpha}\leq d_1^\alpha\Vert u(\cdot)-u(x_1')\Vert_{C^\alpha(B_{d_1/2}(x_1))} 
\]
so
\[
\frac{|u(x_1)-u(x_2)|}{|x_1-x_2|^\eta}\leq C d_1^{\gamma/2-\eta}\leq C
\]
as desired in this case.

\textbf{Case 2:} When $r > \frac{d_1}{2}$, again by the boundary estimates \eqref{eq.growth} and the triangle inequality
\begin{equation*}
\begin{split}
\vert u(x_1)-u(x_2) \vert & \leq  |u(x_1)-u(x_1')| + |u(x_1')-u(x_2')| + |u(x_2')-u(x_2)|  \\
 & \leq Cd_1^{\gamma/2} + C|x_1'-x_2'|^\gamma + Cd_2^{\gamma/2}  \leq C(d_1^{\gamma/2} + r^\gamma+d_1^\gamma+d_2^\gamma + Cd_2^{\gamma/2}) \\
 & \leq C  r^{\gamma/2}= C\vert x_1-x_2\vert^{\gamma/2}. 
\end{split}
\end{equation*}
\end{proof}

In our next proof we are also going to use global H\"older estimates for ``equations with bounded measurable coefficients'':

\begin{prop}\label{prop.globalestbmc}
Let $u$ be a viscosity solution of  
\begin{equation} \label{eq.diri2}
  \left\{ \begin{array}{rcll}
  \mathcal{M}^+(D^2u) & \geq & -\alpha & \textrm{ in } B_1 \\
  \mathcal{M}^-(D^2u) & \geq & \ \ \alpha & \textrm{ in } B_1 \\
   u & = & u_0  & \textrm{ in } \partial B_1 \\
\end{array}\right.
\end{equation}
for $\alpha$ a positive constant and $u_0\in C^\gamma(\partial B_1)$. Then
\begin{equation}\label{eq.globalestbmc}
\Vert u \Vert_{C^\eta(B_1)} \leq C 
\end{equation}
where $C$ is a constant depending only on $n,\lambda,\Lambda,\|u_0\|_{C^\gamma(\partial B_1)}$ and $\alpha$ and $\eta\leq\gamma/2$.
\end{prop}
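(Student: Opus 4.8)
The plan is to follow the proofs of Propositions~\ref{prop.growth} and~\ref{prop.globalest} essentially verbatim. The point is that neither of those two arguments uses anything about the operator $H$ beyond the fact that, by~\eqref{eq.diri}, a solution of~\eqref{eq.dirichlet} is simultaneously a subsolution of $\mathcal{M}^+(D^2u)=-\|h\|_{L^\infty(B_1)}$ and a supersolution of $\mathcal{M}^-(D^2u)=\|h\|_{L^\infty(B_1)}$ — that is, it lies in the class of functions satisfying the two-sided Pucci bounds with right-hand side controlled by $\|h\|_{L^\infty(B_1)}$. A solution of~\eqref{eq.diri2} lies in exactly this class with $\alpha$ in place of $\|h\|_{L^\infty(B_1)}$, so one only has to rerun the two steps — first a boundary growth bound, then an interpolation with the interior estimates — with this replacement.

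First I would establish the boundary estimate: for every $x_0\in\partial B_1$,
\[
\sup_{x\in B_1}\frac{|u(x)-u(x_0)|}{|x-x_0|^{\gamma/2}}\leq C,
\]
copying the proof of Proposition~\ref{prop.growth}. After the usual normalization ($u$ defined on $B:=B_1(1,0,\dots,0)$, $x_0=0$, $u(0)=0$) I would use the same barrier $\psi(x)=Cx_1^{\gamma/2}$. On $\partial B$ the inequality $u=u_0\leq[u_0]_{C^\gamma(\partial B_1)}|x|^\gamma\leq\psi$ is unchanged. Inside $B$, the Hessian $D^2\psi$ has a single nonzero eigenvalue, equal to $C\tfrac{\gamma}{2}(\tfrac{\gamma}{2}-1)x_1^{\gamma/2-2}<0$, which blows up to $-\infty$ as $x_1\to0^+$; since $0<x_1<2$ on $B$, for $C$ large one gets $\mathcal{M}^+(D^2\psi)=\lambda C\tfrac{\gamma}{2}(\tfrac{\gamma}{2}-1)x_1^{\gamma/2-2}\leq-\alpha$ throughout $B$. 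Then $\mathcal{M}^+(D^2u)\geq-\alpha\geq\mathcal{M}^+(D^2\psi)$ yields $u\leq\psi$ by the maximum principle, while $\mathcal{M}^-(D^2(u+\psi))\leq\mathcal{M}^-(D^2u)+\mathcal{M}^+(D^2\psi)\leq0$ in $B$ together with $u+\psi\geq0$ on $\partial B$ yields $u\geq-\psi$ by the minimum principle. Since $x_1\leq|x|$ on $B$, this gives $|u(x)|\leq\psi(x)\leq C|x|^{\gamma/2}$, which is the claim.

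With the boundary growth in hand, the second step is literally the proof of Proposition~\ref{prop.globalest}: take $\eta=\min\{\gamma/2,\alpha_0\}$, where $\alpha_0>0$ is the interior H\"older exponent provided by the Krylov--Safonov estimate (Proposition~4.10 in~\cite{CC}) for functions satisfying two-sided Pucci bounds with bounded right-hand side — precisely the class to which $u$ belongs by~\eqref{eq.diri2} — and split according to whether $r=|x_1-x_2|\leq d_1/2$ or $r>d_1/2$, combining in the first case the rescaled interior estimate on $B_{d_1}(x_1)$ with the boundary bound of the previous step, and in the second case the triangle inequality through the nearest boundary points with the same bound. I do not expect a genuine obstacle here; the only things worth checking with some care are that the very same barrier still does the job — i.e.\ that the blow-up of the bad eigenvalue of $D^2\psi$ absorbs the constant $\alpha$ uniformly on $B$ — and that the interior regularity one invokes is now the Krylov--Safonov/Harnack theory for equations with bounded measurable coefficients (exactly the hypothesis of~\eqref{eq.diri2}) rather than an estimate attached to a particular operator.
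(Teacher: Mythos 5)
Your proposal is correct and follows the same route as the paper's own proof, which likewise simply redirects to Propositions~\ref{prop.growth} and~\ref{prop.globalest}, observing that the barrier argument at the boundary and the Krylov--Safonov interior estimates carry over verbatim to the two-sided Pucci class defined by~\eqref{eq.diri2}. You also correctly read the condition on $\mathcal{M}^-$ as the supersolution bound $\mathcal{M}^-(D^2u)\leq\alpha$ rather than the ``$\geq\alpha$'' that is printed in the statement (a typo, as the later invocation of the proposition in the fixed-point argument with $\delta_0$ confirms).
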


\begin{proof}
The proof follows exactly as that of Proposition \ref{prop.globalest}. We just point out that in order to prove the boundary estimates \eqref{eq.growth} a barrier argument for the Pucci extremal operators is used that is trivially adapted to a situation like \eqref{eq.diri2}. This is then combined with interior estimates that also hold for $\eqref{eq.diri2}$ (see \cite{CC}) to give \eqref{eq.globalestbmc}
\end{proof}

Finally, the following observation is going to be useful: it follows from the proof of Proposition \ref{prop.growth} that the dependence of the constant on $\|h\|_{L^\infty(B_1)}$ is continuous. The same is true for the interior estimates (again, see \cite{CC}) and hence for the constant in \eqref{eq.globalestbmc}.

We can now prove existence of the penalized problem:

\begin{prop}\label{prop.existpenal}
Let $\beta:\R\rightarrow\R$ be a smooth bounded function, $u_0,v_0\in C^\gamma(\partial B_1)$ and $f,g\in L^\infty(B_1)$ . There exist $u,v\in C(B_1)$ such that
\begin{equation}\label{eq.penal}
  \left\{ \begin{array}{rcll}
  F(D^2u) & = & f(x)+\beta(u-v)& \textrm{ in } B_1 \\
  G(D^2v) & = & g(x)-\beta(u-v)& \textrm{ in } B_1 \\
  u & = & u_0  & \textrm{ in } \partial B_1\\
  v & = & v_0 & \textrm{ in } \partial B_1
\end{array}\right.
\end{equation}
in the viscosity sense.
\end{prop}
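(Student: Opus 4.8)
The plan is to set up a fixed point argument in the space of pairs $(u,v)\in C(\overline{B_1})\times C(\overline{B_1})$, using the global H\"older estimates of Propositions \ref{prop.globalest} and \ref{prop.globalestbmc} to produce the compactness needed for Schauder's fixed point theorem. Concretely, given a pair $(\tilde u,\tilde v)\in C(\overline{B_1})^2$, define $w:=\tilde u-\tilde v$ and solve the two \emph{decoupled} Dirichlet problems
\begin{equation*}
  \left\{ \begin{array}{rcll}
  F(D^2u) & = & f(x)+\beta(w(x))& \textrm{ in } B_1 \\
  u & = & u_0 & \textrm{ on } \partial B_1,
\end{array}\right.
\qquad
  \left\{ \begin{array}{rcll}
  G(D^2v) & = & g(x)-\beta(w(x))& \textrm{ in } B_1 \\
  v & = & v_0 & \textrm{ on } \partial B_1.
\end{array}\right.
\end{equation*}
Since $\beta$ is bounded by $N$, the right-hand sides are in $L^\infty(B_1)$ with norm bounded by $\|f\|_{L^\infty(B_1)}+N$ and $\|g\|_{L^\infty(B_1)}+N$ respectively, independently of $w$; existence and uniqueness of viscosity solutions to each of these problems is classical (Perron's method together with comparison, since $F$ and $G$ are uniformly elliptic — see \cite{CC}). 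This defines a map $T(\tilde u,\tilde v)=(u,v)$.

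The key steps, in order, are as follows. First, I would check that $T$ maps a suitable closed convex set into itself: by Proposition \ref{prop.globalest} applied to each equation, $\|u\|_{C^\eta(\overline{B_1})}$ and $\|v\|_{C^\eta(\overline{B_1})}$ are bounded by a constant $C_0$ depending only on $n,\lambda,\Lambda,\gamma,\|u_0\|_{C^\gamma},\|v_0\|_{C^\gamma},\|f\|_{L^\infty},\|g\|_{L^\infty}$ and $N$ — crucially \emph{not} on $(\tilde u,\tilde v)$. Hence $T$ maps the convex set $\mathcal{K}:=\{(u,v): \|u\|_{C^\eta}, \|v\|_{C^\eta}\le C_0,\ u=u_0, v=v_0\text{ on }\partial B_1\}$ into itself, and $\mathcal{K}$ is a compact convex subset of $C(\overline{B_1})^2$ by Arzel\`a--Ascoli. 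Second, I would verify that $T$ is continuous from $C(\overline{B_1})^2$ to itself: if $(\tilde u_k,\tilde v_k)\to(\tilde u,\tilde v)$ uniformly, then $w_k\to w$ uniformly, so $\beta(w_k)\to\beta(w)$ uniformly (by continuity of $\beta$ and dominated convergence, or directly by uniform continuity of $\beta$ on bounded sets); the images $(u_k,v_k)=T(\tilde u_k,\tilde v_k)$ lie in the compact set $\mathcal{K}$, so along any subsequence they converge uniformly, and the standard stability of viscosity solutions under uniform convergence of both the solutions and the right-hand sides (together with uniqueness for the limit problem) forces the full sequence to converge to $T(\tilde u,\tilde v)$. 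Third, Schauder's (Tychonoff) fixed point theorem applied to $T:\mathcal{K}\to\mathcal{K}$ yields a pair $(u,v)$ with $T(u,v)=(u,v)$, which is precisely a viscosity solution of \eqref{eq.penal}, and it is continuous in $\overline{B_1}$ since it belongs to $\mathcal{K}\subset C^\eta(\overline{B_1})^2$.

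The main obstacle I anticipate is the continuity of the solution map $T$ — more precisely, the stability argument: one needs that viscosity solutions of $H(D^2u_k)=h_k$ with $u_k\to u$ uniformly and $h_k\to h$ uniformly satisfy $H(D^2u)=h$ in the viscosity sense. This is standard but must be invoked carefully because the right-hand side depends on the argument through $\beta(w)$; the clean way is to use the a priori compactness (the images all live in $\mathcal{K}$) to extract a uniformly convergent subsequence first, then pass to the limit in the definition of viscosity solution using that a $C^2$ test function touching the limit $u$ strictly from above can be perturbed to touch $u_k$ from above for $k$ large. A secondary point requiring care is that Proposition \ref{prop.globalest} is stated for $H(D^2u)=h$ with $h\in L^\infty$, so it applies verbatim to each decoupled equation here with $h=f+\beta(w)$ or $h=g-\beta(w)$; no new estimate is needed. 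Finally, one should note that this proposition gives existence for \emph{general} smooth bounded $\beta$, so in particular for the specific family $\beta_\epsilon$ of \eqref{eq.betaepsilon}, which is what the subsequent limiting argument will require.
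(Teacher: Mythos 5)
Your proposal is correct and follows the same overall architecture as the paper's proof: set up the map $T$ by solving the two decoupled Dirichlet problems with frozen penalty term $\beta(\tilde u-\tilde v)$, use Proposition \ref{prop.globalest} to get a uniform $C^\eta$ bound on the images (hence an invariant compact convex set by Arzel\`a--Ascoli), and invoke Schauder. The one place you diverge is the continuity of $T$. The paper proves it \emph{quantitatively}: writing $w=u'-u''$ for the difference of two images, uniform ellipticity gives $\mathcal{M}^-(D^2w)\leq F(D^2u')-F(D^2u'')\leq \mathcal{M}^+(D^2w)$, and since $\beta$ is $C^1$ the right-hand sides are controlled by $\|\beta\|_{C^1}(|\bar u-\bar{\bar u}|+|\bar v-\bar{\bar v}|)$; Proposition \ref{prop.globalestbmc} (with the observation that the constant tends to zero with the data, since $w=0$ on $\partial B_1$) then gives a small H\"older norm for $w$. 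You instead argue \emph{softly}: precompactness of the images plus stability of viscosity solutions under uniform convergence plus uniqueness of the limiting Dirichlet problem forces subsequential limits to agree, hence continuity. Both are valid. The paper's route yields an explicit modulus of continuity and uses the $C^1$ regularity of $\beta$ (which is assumed); yours needs only continuity of $\beta$ and leans on uniqueness/comparison for the frozen problem, which is the same ingredient already required to make $T$ well-defined via Perron's method. Either way the remaining steps are identical, so the argument is sound.
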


\begin{proof}

We will use Schauder's Fixed Point Theorem (see \cite{GT}).  Let $\tilde{\alpha}=\eta/2$ with $\eta$ as in Proposition \ref{prop.globalest} and consider the map 
\[
T: C^\eta(B_1) \times C^\eta(B_1)  \longrightarrow  C^\eta(B_1) \times C^\eta(B_1)
\] 
defined as $T(\bar{u},\bar{v}):= (u,v)$, $u,v$ satisfying
\begin{equation}\label{eq.Tpenal}
  \left\{ \begin{array}{rcll}
  F(D^2u) & = & f(x)+\beta(\bar{u}-\bar{v})& \textrm{ in } B_1 \\
  G(D^2v) & = & g(x)-\beta(\bar{u}-\bar{v})& \textrm{ in } B_1 \\
  u & = & u_0  & \textrm{ on } \partial B_1\\
  v & = & v_0 & \textrm{ on } \partial B_1
\end{array}\right.
\end{equation}
Such $u$ and $v$ exist by Perron's method. Also, let
\begin{equation*}
X := \{(\bar{u}, \bar{v})\in C^{\tilde{\alpha}}(B_1) \times C^{\tilde{\alpha}}(B_1): \bar{u} = u_0,\bar{v}=v_0 \textrm{ on } \partial B_1, \Vert \bar{u} \Vert_{C^{\eta}(B_1)},  \Vert \bar{v} \Vert_{C^{\eta}(B_1)} \leq C \}   
\end{equation*}
with $C$ to be determined later. 

If we can show that $X$ is a compact convex set in $C^\eta(B_1) \times C^\eta(B_1)$, that $T$ is continuous in $X$ and $T(X)\subset X$ then by Schauder's Fixed Point Theorem there exists a solution to \eqref{eq.penal}. 
We divide the proof in several steps:

\textbf{Step 1: convexity and compactness of $X$.}

The convexity is trivial. As for the compactness, it is a straight forward consequence of the Arsela-Ascoli theorem since $\tilde{\alpha}<\eta$.

\textbf{Step 2: $T(X)\subset X$.}

Notice that if $\bar{u}, \bar{v} \in C^{\tilde{\alpha}}(B_1)$ and we let 
\[
h:=f+\beta(\bar{u}-\bar{v})\quad\quad\textrm{ and }\quad\quad k:=g-\beta(\bar{u}-\bar{v})
\]
we have, as $f,g, \beta$ are bounded, that $h,k \in L^\infty(B_1)$. Hence, from Proposition \ref{prop.globalest} we know that 
\begin{equation} 
\begin{split}
\Vert u \Vert_{C^{\eta}(B_1)}\leq C\\
\Vert v \Vert_{C^{\eta}(B_1)}\leq C
\end{split}
\end{equation} 
for some constant $C>0$ that depends only on $n$, $\lambda$, $\Lambda$, $\Vert f \Vert_{L^\infty(B_1)}$, $\Vert g \Vert_{L^\infty(B_1)}$, $\Vert u_0 \Vert_{C^\gamma(B_1)}$ and $\Vert v_0 \Vert_{C^\gamma(B_1)}$ (this is the constant $C$ used to define $X$). In particular, this implies
that $(u,v)\in X$.

\textbf{Step 3: $T$ is continuous.}

Let $T(\bar{u}, \bar{v})=(u',v')$ and $T(\bar{\bar{u}}, \bar{\bar{v}})=(u'',v'')$. We want to show that given $\epsilon>0$ we can find $\delta>0$ so that 
\[
\Vert \bar{u} - \bar{\bar{u}} \Vert_{C^{\tilde{\alpha}}(B_1)}<\delta \quad\quad\textrm{ and }\quad\quad \ \Vert \bar{v} - \bar{\bar{v}} \Vert_{C^{\tilde{\alpha}}(B_1)} < \delta
\] 
imply
\[
\Vert u'-u''\Vert_{C^{\tilde{\alpha}}(B_1)}< \epsilon \quad\quad\textrm{ and }\quad\quad \Vert v'-v''\Vert _{C^{\tilde{\alpha}}(B_1)} < \epsilon.
\] 

Notice that 
$$F(D^2u') - F(D^2u'') = \beta(\bar{u}-\bar{v}) - \beta(\bar{\bar{u}}-\bar{\bar{v}}) $$
and from the definition of uniform ellipticity we have
$$ \mathcal{M}^-(D^2(u'-u'')) \leq  F(D^2u') - F(D^2u'') \leq \mathcal{M}^+(D^2(u'-u''))$$ 
in the viscosity sense. Now let $w := u'- u''$. From the previous two inequalities we have 
\begin{equation}\label{eq.TempEqu1}
  \left\{ \begin{array}{rcll}
  \mathcal{M}^+(D^2w) & \geq &  \ \ -\Vert \beta \Vert_{C^1(\R)}(|\bar{u} - \bar{\bar{u}}|+|\bar{v} - \bar{\bar{v}}|)     & \textrm{ in } B_1 \\
  \mathcal{M}^-(D^2w) & \leq & \ \ \ \ \Vert \beta \Vert_{C^1(\R)}(|\bar{u} - \bar{\bar{u}}|+|\bar{v} - \bar{\bar{v}}|) & \textrm{ in } B_1 \\
  w & = & 0  & \textrm{ on } \partial B_1, 
\end{array}\right.
\end{equation}
so if we pick $\delta$ small enough so that $\Vert \beta \Vert_{C^1(B_1)}(|\bar{u} - \bar{\bar{u}}|+ |\bar{v} - \bar{\bar{v}}|)\leq \delta_0$  for some $\delta_0>0$ to be chosen, we can rewrite \eqref{eq.TempEqu1} as
\begin{equation}
  \left\{ \begin{array}{rcll}
  \mathcal{M}^+(D^2w) & \geq & -\delta_0    & \textrm{ in } B_1 \\
  \mathcal{M}^-(D^2w) & \leq & \ \ \delta_0    & \textrm{ in } B_1 \\
  w & = & 0  & \textrm{ in } \partial B_1.\\ 
\end{array}\right.
\end{equation}

Then, by Proposition \ref{prop.globalestbmc} and the observation following it (and choosing $\alpha=\delta_0$ in \eqref{eq.diri2}) we can pick $\delta_0$ small enough to get 
\[
\Vert u'-u''\Vert_{C^{\tilde{\alpha}}(B_1)} = \Vert w \Vert_{C^{\tilde{\alpha}}(B_1)} \leq \Vert w \Vert_{C^\eta(B_1)}\leq \epsilon
\] 
as desired. The proof of $\Vert v'-v''\Vert_{C^{\tilde{\alpha}}(B_1)} \leq  \epsilon$  \ follows in an analog way.

\end{proof}

We now show the main result of this section, which is the existence of solutions of the two membranes problem \eqref{eq.main}.

\begin{thm}
There exist $u,v\in C(B_1)$ that solve \eqref{eq.main} in the viscosity sense.
\end{thm}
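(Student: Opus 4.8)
The plan is to obtain $(u,v)$ as a limit of the penalized solutions $(u_\epsilon,v_\epsilon)$ constructed via Proposition \ref{prop.existpenal} applied to $\beta=\beta_\epsilon$ as in \eqref{eq.betaepsilon}--\eqref{eq.beta}. First I would record the key observation that makes the penalization admissible: since $|\beta_\epsilon|\le N=\|f-g\|_{L^\infty(B_1)}$, the right-hand sides $h_\epsilon:=f+\beta_\epsilon(u_\epsilon-v_\epsilon)$ and $k_\epsilon:=g-\beta_\epsilon(u_\epsilon-v_\epsilon)$ are bounded in $L^\infty(B_1)$ \emph{uniformly in $\epsilon$}, with bound depending only on $\|f\|_{L^\infty},\|g\|_{L^\infty}$. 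Hence by Proposition \ref{prop.globalest} the family $\{(u_\epsilon,v_\epsilon)\}$ is uniformly bounded in $C^\eta(B_1)$ for $\eta\le\gamma/2$, so by Arzel\`a--Ascoli there is a sequence $\epsilon_j\to0$ along which $u_{\epsilon_j}\to u$ and $v_{\epsilon_j}\to v$ uniformly in $\overline{B_1}$, with $u,v\in C(\overline{B_1})$ and $u=u_0$, $v=v_0$ on $\partial B_1$.

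Next I would verify that the limit satisfies the constraint $u\ge v$ and the two one-sided inequalities $F(D^2u)\le f$, $G(D^2v)\ge g$ in $B_1$. For the constraint: on the set $\{u_\epsilon<v_\epsilon\}$ one has $\beta_\epsilon(u_\epsilon-v_\epsilon)\le -N$ is not quite right; rather I argue via the difference $w_\epsilon:=v_\epsilon-u_\epsilon$. Using \eqref{eq.comp} and \eqref{eq.diri}, $\mathcal{M}^+(D^2 w_\epsilon)\ge G(D^2 v_\epsilon)-F(D^2 u_\epsilon)=g-f+2\beta_\epsilon(u_\epsilon-v_\epsilon)\ge 0$ on the region where $\beta_\epsilon(u_\epsilon-v_\epsilon)=-N/2$ fails to dominate... the clean statement is: wherever $w_\epsilon>\epsilon$ we have $\beta_\epsilon(u_\epsilon-v_\epsilon)=\beta_\epsilon(-w_\epsilon)=-N$, so $\mathcal{M}^+(D^2 w_\epsilon)\ge g-f-2N$, which is not obviously $\ge 0$; instead I would show directly that $w_\epsilon\le C\epsilon$ by a barrier/maximum-principle argument exploiting $f-g\ge0$, then pass to the limit to get $w=v-u\le 0$. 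For the one-sided equations: since $\beta_\epsilon\le0$, we have $F(D^2 u_\epsilon)=f+\beta_\epsilon(u_\epsilon-v_\epsilon)\le f$ and $G(D^2 v_\epsilon)=g-\beta_\epsilon(u_\epsilon-v_\epsilon)\ge g$ for every $\epsilon$, and these viscosity inequalities are stable under uniform convergence (by the standard closedness of viscosity sub/supersolutions under local uniform limits, using $F,G$ continuous), yielding $F(D^2u)\le f$ and $G(D^2v)\ge g$ in $B_1$.

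The remaining point is the complementarity: $F(D^2u)=f$ and $G(D^2v)=g$ in $\Omega=\{u>v\}$. Fix $x_0\in\Omega$; then $u(x_0)>v(x_0)$, and by uniform convergence $u_{\epsilon_j}-v_{\epsilon_j}>1\ge\epsilon_j$ on a fixed small ball $B_r(x_0)$ for $j$ large. On that ball $\beta_{\epsilon_j}(u_{\epsilon_j}-v_{\epsilon_j})\equiv0$, so $F(D^2 u_{\epsilon_j})=f$ and $G(D^2 v_{\epsilon_j})=g$ there in the viscosity sense; passing to the limit (again by stability of viscosity solutions under uniform convergence) gives $F(D^2u)=f$ and $G(D^2v)=g$ in $B_r(x_0)$, hence in all of $\Omega$. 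This completes the proof.

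The main obstacle I anticipate is the constraint $u\ge v$: the naive bound on $\beta_\epsilon$ does not immediately force $u_\epsilon-v_\epsilon\ge-C\epsilon$, so one genuinely needs to combine the sign hypothesis $f-g\ge0$ with the structural relation \eqref{eq.comp} and \eqref{eq.diri} in a maximum-principle argument for $w_\epsilon=v_\epsilon-u_\epsilon$ — specifically, showing that on $\{w_\epsilon>\epsilon\}$ the equation forces $\mathcal{M}^-(D^2 w_\epsilon)\le g-f+2\beta_\epsilon(u_\epsilon-v_\epsilon)$ together with the definition of $N$ to rule out an interior maximum of $w_\epsilon$ exceeding $\epsilon$. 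The other technical care needed is the invocation of the standard viscosity stability theorem (uniform limits of subsolutions are subsolutions), which applies here because $F$ and $G$ are continuous (indeed uniformly elliptic) and it is enough, as remarked after \eqref{eq.main}, to test with paraboloids.
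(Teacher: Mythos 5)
Your overall strategy (penalization, uniform $C^\eta$ bounds via Proposition \ref{prop.globalest}, Arzel\`a--Ascoli, then passing to the limit) is exactly the paper's. Your treatment of the one-sided inequalities via closedness of viscosity sub/supersolutions and your complementarity argument (on a neighborhood of $x_0\in\Omega$ the penalization vanishes for $\epsilon_j$ small, so the equations hold and pass to the limit) are both sound and in the same spirit as the paper, which instead argues by touching with paraboloids perturbed by a small quadratic.

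The genuine gap is the constraint $u\geq v$, which you explicitly flag as unresolved. Your concrete attempt fails because of a sign error: from \eqref{eq.penalepsilon} one has $G(D^2v_\epsilon)-F(D^2u_\epsilon)=(g-\beta_\epsilon)-(f+\beta_\epsilon)=g-f-2\beta_\epsilon(u_\epsilon-v_\epsilon)$, not $g-f+2\beta_\epsilon(u_\epsilon-v_\epsilon)$. With the correct sign, on $\{w_\epsilon:=v_\epsilon-u_\epsilon> 0\}$ you have $\beta_\epsilon(u_\epsilon-v_\epsilon)=-N$ (since $u_\epsilon-v_\epsilon\le 0$), hence $g-f-2\beta_\epsilon=g-f+2N\geq N\geq 0$. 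Combined with $G\le F$, the theorem on differences of viscosity sub/supersolutions gives $\mathcal{M}^+(D^2w_\epsilon)\geq 0$ there, and since $w_\epsilon<0$ on $\partial B_1$, the set $\{w_\epsilon>0\}$ is compactly contained with $w_\epsilon=0$ on its boundary; the ABP maximum principle then forces $\{w_\epsilon>0\}=\emptyset$, i.e. $w_\epsilon\leq 0$, which passes to the limit. So your route is in fact viable once the sign is repaired, and you do not need the intermediate claim $w_\epsilon\le C\epsilon$ --- you get $w_\epsilon\le 0$ outright, and the threshold is $w_\epsilon\ge 0$, not $w_\epsilon>\epsilon$, because $\beta(t)=-N$ already for $t\le 0$.

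The paper's own argument for $u\geq v$ is different and worth knowing: it exploits the convexity of $F$ together with \eqref{eq.comp} to obtain
\[
F\Big(D^2\tfrac{u_\epsilon-v_\epsilon}{2}\Big)\le \tfrac12\big(F(D^2u_\epsilon)-G(D^2v_\epsilon)\big)=\tfrac12\big(f-g+2\beta_\epsilon(u_\epsilon-v_\epsilon)\big)
\]
in the viscosity sense. If $u-v$ were negative somewhere, then for $\epsilon$ small $u_\epsilon-v_\epsilon$ attains a negative interior minimum at some $y$ (it is nonnegative on $\partial B_1$), and a plane $P$ touches $\tfrac{u_\epsilon-v_\epsilon}{2}$ from below at $y$. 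Then $F(D^2P)=F(0)=0$, while the right-hand side at $y$ is $\tfrac12(f(y)-g(y)-2N)<0$, a contradiction. This avoids ABP and uses only the touching-plane definition, at the cost of invoking the viscosity inequality for the half-difference. Either route works; yours is essentially a repaired version of the standard difference-plus-ABP argument, whereas the paper leans on the midpoint convexity of $F$ and the compatibility $G(X)=-F(-X)$, which is the structural hypothesis of the whole paper.
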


\begin{proof}
Let $N$ and $\beta:\R\rightarrow\R$ be as in \eqref{eq.ene} and \eqref{eq.beta} and define $\beta_{\epsilon}(t)$ as in \eqref{eq.betaepsilon}. Now, let $u_\epsilon, v_\epsilon$ solutions of Problem \eqref{eq.penalepsilon}, i.e. $u_\epsilon, v_\epsilon$
satisfy
\begin{equation*}
  \left\{ \begin{array}{rcll}
  F(D^2u_\epsilon) & = & f+\beta_\epsilon(u_\epsilon-v_\epsilon)& \textrm{ in } B_1 \\
  G(D^2v_\epsilon) & = & g-\beta_\epsilon(u_\epsilon-v_\epsilon)& \textrm{ in } B_1 \\
  u_\epsilon & = & u_0  & \textrm{ in } \partial B_1\\
  v_\epsilon & = & v_0 & \textrm{ in } \partial B_1
\end{array}\right.
\end{equation*}

By Proposition \ref{prop.existpenal} such $u_\epsilon, v_\epsilon$ and exist. Moreover, notice that as 
\[
\Vert \beta_\epsilon \Vert_{L^\infty(\R)} = \Vert \beta \Vert_{L^\infty(\R)}\leq N 
\]
and $f, g \in L^\infty(B_1)$, Proposition \ref{prop.globalest} gives us $\Vert u_\epsilon \Vert_{C^{\eta}(B_1)},\Vert v_\epsilon \Vert_{C^{\eta}(B_1)} \leq C$ \ for some $C>0$ that does not depend on $\epsilon$. Hence, by Arsela-Ascoli (up to subsequences) 
$u_\epsilon \rightarrow u$ and $v_\epsilon \rightarrow v$  uniformly on $B_1$ for some $u,v\in C^{\tilde{\eta}}(B_1)$ where $\tilde{\eta}<\eta$. We claim that $u$ and $v$ solve \eqref{eq.main}. 

We first want to see that $u \geq v$. Assume not, i.e. suppose the exists $x\in B_1$ such that 
\[
u(x)-v(x) = -\delta <0.
\] 
From the uniform convergence we will have $u_\epsilon(x)-v_\epsilon(x) < -\delta/2$ for $\epsilon$ small enough. In particular, $u_\epsilon-v_\epsilon$ has to have a negative minimum at some point $y\in B_1$ (recall that on $\partial B_1$ we have $u_0\geq v_0$). Moreover, $u_\epsilon-v_\epsilon$
satisfies, by the convexity of $F$ and the fact that $F(X)=-G(-X)$,
\[
F\Big(D^2\Big(\frac{u_\epsilon-v_\epsilon}{2}\Big)\Big)\leq \frac{1}{2}(f-g + 2\beta_\epsilon(u_\epsilon-v_\epsilon))
\] 
in the viscosity sense. Let $P$ be a plane touching $\frac{u_\epsilon-v_\epsilon}{2}$ by below at $y$. Then  
\[
F(D^2P)\equiv 0
\] 
but since in particular $P$ is a $C^2$ function we must have 
\[
F(D^2P)\leq\frac{1}{2}(f(y)-g(y)+2\beta_\epsilon(P(y)))<0
\] 
a contradiction.

Now we want to show that $u$ and $v$ satisfy the corresponding equations. Let us start by showing that $F(D^2u)\leq f \textrm{ in } B_1$. 

Let $\varphi$ be a paraboloid touching $u$ by below at $x_0\in B_1$. Given $\xi>0$ there exists $\delta>0$ such that 
\[
f(x)\leq f(x_0)+\xi
\]
for any $x\in B_\delta(x_0)\subset B_1$. Also, for any $\eta>0$ we can choose $\epsilon$ small enough so that a translation of $\varphi(x)-\frac{\eta}{2}|x-x_0|^2$ (which we call $\tilde{\varphi}$) touches $u_\epsilon$ by below at $x_1\in B_\delta(x_0)$. Hence
\[
F(D^2\tilde{\varphi}(x_1))\leq f(x_1)+\beta_\epsilon(u_\epsilon(x_1)-v_\epsilon(x_1))\leq f(x_1)\leq f(x_0)+\xi.
\]
Since $\xi$ was arbitrary we get
\[
F(D^2\tilde{\varphi}(x_1))\leq f(x_0)
\]
but 
\[
F(D^2\tilde{\varphi}(x_1))=F(D^2\varphi(x_1)-\eta\textrm{Id})=F(D^2\varphi(x_0)-\eta\textrm{Id})
\]
and letting $\eta\rightarrow0$ we get
\[
F(D^2\varphi(x_0))\leq f(x_0)
\]
as desired (recall that $F$ is continuous in the space of matrices).

Using again the uniform convergence, the definition of viscosity solutions and considering a test function $\varphi$ that touches $u$ by above we can similarly show that  $F(D^2u)  \geq  f  \textrm{ in } B_1\cap\Omega$ and conclude that $F(D^2u)  =  f  \textrm{ in } B_1\cap\Omega$. The 
proofs of  $G(D^2v)  \geq  g \textrm{ in } B_1$ and $G(D^2v)  =  g  \textrm{ in } B_1\cap\Omega$ are analogous to the previous reasoning. It is immediate from uniform convergence that $u=u_0$ and $v=v_0$ on $\partial B_1$. 
\end{proof}

\begin{rem}
It is noting that the proof would still hold if we slightly relax the assumptions on the operators by asking just $F(X)\leq -G(-X)$.
\end{rem} 

\begin{rem}
We point out that there is no uniqueness in this problem. This comes from the fact that ``there is no equation'' on the contact set. In fact, let (for $n=1$) 
\begin{equation*}
\begin{aligned}[c]
u(x)=\left\{\begin{array}{rcll}
  \frac{x_+^2}{2} & \textrm{ for } & 0< x\leq 1 \\
  0         & \textrm{ for } & -1\leq x\leq 0 
\end{array}\right.
\end{aligned}
\qquad
\begin{aligned}[c]
v(x)=\left\{\begin{array}{rcll}
  -\frac{x_+^2}{2} & \textrm{ for } & 0< x\leq 1 \\
  0         & \textrm{ for } & -1\leq x\leq 0. 
\end{array}\right.
\end{aligned}
\end{equation*}

$u$ and $v$ are $C^{1,1}$ functions and they are strong solutions (and hence viscosity solutions) of \eqref{eq.main} with $F=\mathcal{M}^+,G=\mathcal{M}^-,f=\Lambda$ and $g=-\Lambda$. However, we can make a perturbation $\psi\in C^\infty_c((-1,0))$ such that 
\[
-1\leq \psi'' \leq 1 
\]
in $(-1,0)$ and get another solution. 

Of course this example can be easily generalized to $n\geq 2$ choosing
\begin{equation*}
\begin{aligned}[c]
u(x)=\left\{\begin{array}{rcll}
   (|x|-1/2)^2_+ & \textrm{ in } & B_1\setminus B_{1/2} \\
  0         & \textrm{ in } & B_{1/2} 
\end{array}\right.
\end{aligned}
\qquad
\begin{aligned}[c]
v(x)=\left\{\begin{array}{rcll}
   -(|x|-1/2)^2_+ & \textrm{ in } & B_1\setminus B_{1/2} \\
  0         & \textrm{ in } & B_{1/2} 
\end{array}\right.
\end{aligned}
\end{equation*}
and modifying the right hand sides accordingly. 

However, uniqueness does hold in the ``nonexcercise region'' $\Omega$. In fact, if two pairs of solutions $(u,v)$ and $(u',v')$ satisfy 
\[
 u\geq u'\quad\textrm{ and }\quad v\geq v'\quad\textrm{ on } \partial B_1\cup\partial\Omega
\]
then 
\[
 u\geq u'\quad\textrm{ and }\quad v\geq v'\quad\textrm{ in }  B_1\cap\Omega
\]
by the maximum principle for fully nonlinear elliptic equations (notice that in $B_1\cap\Omega$ we have $F(D^2u)=f$ and $G(D^2v)=g$). 
\end{rem}

\section{Regularity for the solution pair}
\label{sec.3}

In this section we prove our main regularity result, Theorem \ref{thm.main}. The fact that this is the optimal regularity can be easily seen by considering  $u(x)=\frac{x_+^2}{2}$ and $v(x)=-\frac{x_+^2}{2}$ (in one dimension) and noticing that 
they solve \eqref{eq.main} with $f\equiv\Lambda$ and $g\equiv-\Lambda$ in $[-1,1]$.

To prove Theorem \ref{thm.main} we show that solutions to \eqref{eq.main} satisfy the hypothesis of  the following Theorem (see Theorem 2.1 in \cite{IM}): 

\begin{thm}\label{thm.indreimine}
 Let $f\in C^\gamma(B_1)$ and $u$ a $W^{2,n}(B_1)$ solution of  
\begin{equation*}
  \left\{ \begin{array}{rcll}
  F(D^2u) & = & f(x) & \textrm{ in } B_1\cap\Omega \\
  |D^2u|   & \leq & C & \textrm{a.e. in } B_1\cap\Omega^c
\end{array}\right.
\end{equation*}
for some open set $\Omega\subset B_1$ and some elliptic operator $F$ that is either concave or convex. Then, there exists a constant $C$ depending only on $\|f\|_{C^\gamma(B_1)},\|u\|_{W^{2,n}(B_1)}$, the dimension and the ellipticity constants
such that 
\[
 |D^2u|\leq C \quad \textrm{a.e. in } B_{1/2}.
\]
\end{thm}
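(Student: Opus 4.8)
This statement is Theorem~2.1 of \cite{IM}; the plan below is the strategy one would follow to prove it. The idea is to reduce the a.e.\ Hessian bound to a \emph{uniform pointwise quadratic growth} estimate at every point of $B_{1/2}$, and to obtain the latter by a dyadic iteration in which the free boundary $\partial\Omega$ is the only source of difficulty.

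\textbf{Step 1 (reductions).} We may assume $F(0)=0$. Write $C_0$ for the constant in the hypothesis $|D^2u|\le C_0$ a.e.\ in $B_1\cap\Omega^c$, and set $\tilde f:=F(D^2u)$. On $\Omega$ we have $\tilde f=f$, while on $\Omega^c$ the bound on $D^2u$ and the ellipticity of $F$ give $|\tilde f|\le n\Lambda C_0$; hence $\tilde f\in L^\infty(B_1)$. Thus $u$ is a viscosity solution of $F(D^2u)=\tilde f$ in all of $B_1$, and the standard $W^{2,p}$ and interior $C^{1,\alpha}$ estimates for fully nonlinear equations with bounded right-hand side (cf.\ \cite{CC}) give $u\in W^{2,p}(B_{3/4})\cap C^{1,\alpha}(B_{3/4})$ for all $p<\infty$; moreover, since $F$ is convex or concave, interior Evans--Krylov estimates give $u\in C^{2,\alpha}_{\mathrm{loc}}(B_1\cap\Omega)$. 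In particular $\ell_{x_0}(x):=u(x_0)+\nabla u(x_0)\cdot(x-x_0)$ is well defined for every $x_0\in B_{1/2}$, and since $u\in W^{2,n}$ it suffices to prove that there is a constant $C$, depending only on $n$, the ellipticity constants, $\|f\|_{C^\gamma(B_1)}$, $\|u\|_{W^{2,n}(B_1)}$ and $C_0$, with
\[
\sup_{B_r(x_0)}|u-\ell_{x_0}|\le C\,r^2\qquad\text{for every }x_0\in B_{1/2}\text{ and }0<r\le\tfrac14 .
\]

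\textbf{Step 2 (localization).} If $B_d(x_0)\subset\Omega$, the equation $F(D^2u)=f$ with $f\in C^\gamma$ holds on $B_d(x_0)$ and the quadratic bound at $x_0$ for scales $r\le d$ follows from the frozen-coefficient iteration of Step 3 (in which the ``free boundary'' alternative never occurs). Consequently the only genuine issue is the quadratic bound at scales comparable with, or larger than, $\operatorname{dist}(x_0,\partial\Omega)$ --- that is, a blow-up analysis at free boundary points, where the hypothesis $|D^2u|\le C_0$ on $\Omega^c$ has to be used.

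\textbf{Step 3 (dyadic iteration).} Fix $x_0\in B_{1/2}$ and a small $\rho\in(0,\tfrac14)$ to be chosen. One constructs inductively affine functions $\ell_k$, with $r_k:=\rho^k/8$, such that
\[
\sup_{B_{r_k}(x_0)}|u-\ell_k|\le M r_k^2,\qquad |\nabla\ell_{k+1}-\nabla\ell_k|\le C_\ast M r_k ,
\]
for a large $M$ depending only on the data; the second estimate makes $\nabla\ell_k$ a Cauchy sequence, so $\ell_k\to\ell_{x_0}$ and the quadratic bound follows with constant $\sim M$. For the inductive step, rescale $w_k(x):=(u-\ell_k)(x_0+r_kx)/(Mr_k^2)$ on $B_1$: then $\|w_k\|_{L^\infty(B_1)}\le1$, $w_k$ solves $F_M(D^2w_k)=M^{-1}f(x_0+r_k\,\cdot\,)$ in the rescaled set $\Omega_k:=(\Omega-x_0)/r_k\cap B_1$ --- with $F_M(N):=M^{-1}F(MN)$ still uniformly elliptic and convex and $F_M(0)=0$ --- while $|D^2w_k|\le C_0/M$ on $B_1\setminus\Omega_k$. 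One then argues by dichotomy. If $|B_{1/2}\setminus\Omega_k|<\delta\,|B_{1/2}|$ (the complement is thin), compare $w_k$ with the solution $h$ of the frozen equation $F_M(D^2h)=M^{-1}f(x_0)$ having the same boundary values as $w_k$ on a slightly smaller ball: Evans--Krylov gives an \emph{absolute} bound $\|h\|_{C^{2,\alpha}(B_{1/4})}\le C_1$, and the ABP estimate applied to $w_k-h$ --- whose Pucci right-hand side is $O(M^{-1}r_k^\gamma[f]_{C^\gamma})$ on $\Omega_k$ and $O(C_0/M+C_1)$ on the thin set --- yields $\|w_k-h\|_{L^\infty(B_{1/2})}\le C(\delta^{1/n}+M^{-1}r_k^\gamma[f]_{C^\gamma})$; taking $\tilde\ell_{k+1}$ equal to the linear part of $h$ at $0$, we get $\sup_{B_\rho}|w_k-\tilde\ell_{k+1}|\le C\delta^{1/n}+CM^{-1}r_k^\gamma[f]_{C^\gamma}+\tfrac12 C_1\rho^2$, and choosing first $\rho$, then $\delta$, then $M$ makes the right-hand side $\le\rho^2$, which is precisely the inductive hypothesis at step $k+1$. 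If instead $|B_{1/2}\setminus\Omega_k|\ge\delta\,|B_{1/2}|$ (the complement is fat), one uses that $|D^2w_k|\le C_0/M$ holds on a set of definite measure, together with the $W^{2,\varepsilon}$-type estimate coming from the (almost vanishing) two-sided Pucci bounds for $w_k$, to conclude that $w_k$ is close in $L^\infty(B_{1/2})$ to an affine function with the same contraction factor $\rho^2$ --- the point being that the Hessian bound available on the fat set is an absolute constant (indeed $\to0$ as $M\to\infty$), which beats $M$.

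\textbf{Step 4 (conclusion and main obstacle).} Carrying out Step 3 at every $x_0\in B_{1/2}$ and combining with Step 2 yields the uniform quadratic bound, and since $u\in W^{2,n}(B_{3/4})$ this gives $|D^2u|\le CM$ at a.e.\ point of $B_{1/2}$, as claimed. The main obstacle is the ``fat complement'' case: deducing an affine approximation with the contraction factor $\rho^2$ from a Hessian bound that holds only on a positive \emph{fraction} of the ball (not a full-measure subset) requires a delicate argument, combining the absolute bound $C_0$ on that subset with the $W^{2,\varepsilon}$ estimate for solutions of uniformly elliptic equations with bounded measurable coefficients (e.g.\ via a compactness argument). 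A secondary, purely bookkeeping, difficulty is to fix the thresholds $\rho,\delta,M$ in the right order and to keep the slope increments $|\nabla\ell_{k+1}-\nabla\ell_k|$ summable. Finally, the hypothesis that $F$ be convex or concave enters only --- but crucially --- through the Evans--Krylov $C^{2,\alpha}$ estimate for the frozen-coefficient comparison function; without it the iteration does not close and one cannot expect better than $C^{1,\alpha}$.
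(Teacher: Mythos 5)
First, a point of comparison: the paper itself does not prove this statement at all --- it is quoted as Theorem 2.1 of \cite{IM} and used as a black box --- so your proposal can only be judged as a reconstruction of the Indrei--Minne argument, not against an in-paper proof. Judged that way, the central iteration in your Step 3 does not close. In the thin-complement case you keep \emph{affine} approximants $\ell_k$ with a fixed constant $M$, so after rescaling the inductive step requires $\sup_{B_\rho}|w_k-\tilde\ell_{k+1}|\le\rho^2$; but your error contains the contribution of the quadratic part of the comparison function $h$, which is of size $C_1\rho^2$ with $C_1$ the Evans--Krylov constant. Both sides scale like $\rho^2$, so no choice of $\rho$, $\delta$, $M$ (in any order) makes $C_1\rho^2\le\rho^2$ unless $C_1\le 1$, which is not available. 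This is exactly the classical reason why affine-approximation schemes yield $C^{1,\alpha}$ for $\alpha<1$ but never the exponent $2$: to reach $C^{1,1}$ one must approximate by paraboloids (solutions of the frozen equation), and then the genuine difficulty is that the Hessians of the approximating paraboloids a priori drift by an amount of order $C_1$ at each step and must be shown to remain uniformly bounded. That is precisely where the hypothesis $|D^2u|\le C$ on $\Omega^c$ has to be used, and it is absent from your scheme.

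The second horn of your dichotomy is asserted rather than proved. Small two-sided Pucci bounds for $w_k$ only give Krylov--Safonov-type $C^\alpha$ information, and a Hessian bound $C_0/M$ on a set occupying a fixed fraction $\delta$ of the ball does not by itself produce an approximation, affine or quadratic, with error $\rho^2$ at the next scale; you flag this as ``the main obstacle'' but offer no mechanism for it, and with the thin case also failing, neither alternative closes the induction. A workable route (and, in substance, what is done in \cite{IM} and in the closely related work of Figalli and Shahgholian) is to iterate with paraboloids $P_k$ and argue by compactness at the scale where their Hessians would first become large: the normalized blow-up solves the frozen equation globally with quadratic growth, hence is a polynomial by Evans--Krylov plus a Liouville theorem, while the positive-density portion of $\Omega^c$, on which $|D^2u|\le C_0$, forces that polynomial's Hessian to be of size $C_0$, contradicting the assumed degeneration. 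Some argument of this type, controlling the approximating Hessians rather than the affine slopes, is the missing core of the proof.
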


The first step is to show the following Calder\'on-Zygmund type estimate:

\begin{prop}\label{prop.cz}
Let $u$ and $v$ solve \eqref{eq.main}. Then $u$ and $v$ belong to $W^{2,p}(B_{1/2})$ for any $1<p<\infty$ and  
\begin{equation}\label{eq.czbound}
 \|u\|_{W^{2,p}(B_{1/2})}, \|v\|_{W^{2,p}(B_{1/2})} \leq C 
\end{equation}
for some constant $C$ depending only on $n,\lambda,\Lambda, \|u\|_{L^\infty(B_1)}, \|v\|_{L^\infty(B_1)}, \|f\|_{L^\infty(B_1)}$ and $\|g\|_{L^\infty(B_1)}$. 
\end{prop}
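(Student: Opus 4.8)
## Proof Strategy for Proposition 3.3 (the Calderón–Zygmund estimate)

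The plan is to show that each of $u$ and $v$ is simultaneously a subsolution and a supersolution of a uniformly elliptic equation with a right-hand side in $L^\infty(B_1)$, and then invoke the interior $W^{2,p}$ estimates of Caffarelli (Chapter 7 of \cite{CC}), which give $W^{2,p}$ bounds for any $1<p<\infty$ depending only on $n,\lambda,\Lambda$, the $L^\infty$ norm of the solution and the $L^\infty$ norm of the right-hand side. The key observation is that, although there is \emph{no equation} for $u$ on the contact set $\{u=v\}$, we can still control $F(D^2u)$ from both sides there. First I would establish the one-sided bounds on all of $B_1$: from the third and fourth lines of \eqref{eq.main} together with the ellipticity \eqref{eq.diri}, $u$ is a global supersolution, i.e. $\mathcal{M}^-(D^2u)\leq F(D^2u)\leq f$ in $B_1$ in the viscosity sense, hence $\mathcal{M}^-(D^2u)\leq \|f\|_{L^\infty(B_1)}$ everywhere.

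The real point is the missing lower bound $\mathcal{M}^+(D^2u)\geq -C$ on the contact set. Here I would use the comparison with $v$. On $\Omega=\{u>v\}$ we have $F(D^2u)=f$, so $\mathcal{M}^+(D^2u)\geq f\geq -\|f\|_{L^\infty}$ there. On the contact set $\{u=v\}$, the function $u$ touches $v$ from above, so any paraboloid touching $u$ from below at an interior contact point also touches $v$ from below there; since $v$ satisfies $G(D^2v)\geq g$ in $B_1$ in the viscosity sense (second and third lines of \eqref{eq.main}), and $G(X)=-F(-X)$ is elliptic, we get $\mathcal{M}^+(D^2u)\geq G(D^2u)\geq g\geq -\|g\|_{L^\infty}$ at such points, in the viscosity sense. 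Combining the two regimes, $u$ satisfies $\mathcal{M}^+(D^2u)\geq -(\|f\|_{L^\infty(B_1)}+\|g\|_{L^\infty(B_1)})$ in all of $B_1$ in the viscosity sense. Together with the supersolution bound, $u$ solves a pair of Pucci inequalities with bounded right-hand side, and the $W^{2,p}$ estimates of \cite{CC} apply on $B_{1/2}$, with the stated dependence (the $L^\infty$ norm of $u$ enters through the scaling of the estimate). The argument for $v$ is symmetric: globally $\mathcal{M}^+(D^2v)\geq g$, and on $\{u=v\}$ a paraboloid touching $v$ from above also touches $u$ from above, so $\mathcal{M}^-(D^2v)\leq F(D^2v)\leq f$ there, while on $\Omega$ one has $G(D^2v)=g$ hence $\mathcal{M}^-(D^2v)\leq g$; so $v$ also satisfies a two-sided Pucci bound with right-hand side controlled by $\|f\|_{L^\infty}+\|g\|_{L^\infty}$.

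The step I expect to be the main obstacle is making the viscosity-sense argument on the contact set fully rigorous — i.e. verifying carefully that a test paraboloid touching $u$ from below at a point of $\{u=v\}$ genuinely touches $v$ from below there (which is immediate since $u\geq v$ with equality at that point), and that one may therefore legitimately feed it into the viscosity inequality $G(D^2v)\geq g$. One must also be slightly careful at points of $\{u=v\}$ that lie on $\partial\Omega$, where $u$ could a priori be touched from below both by the "obstacle side" and the "free side"; but the inequality $\mathcal{M}^+(D^2u)\geq -(\|f\|_{L^\infty}+\|g\|_{L^\infty})$ holds in either case, so no difficulty arises. Once these two-sided inequalities with bounded measurable right-hand side are in place, the conclusion \eqref{eq.czbound} is a direct citation of the interior $W^{2,p}$ theory, and the constant depends only on the quantities listed in the statement.
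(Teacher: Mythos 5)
Your overall strategy --- bound the operator on all of $B_1$ by combining the equation for $u$ off the contact set with the equation for $v$ on it, then invoke Calder\'on--Zygmund theory --- is the same as the paper's, but there is a genuine gap in how you close the argument. You weaken the estimate to the pair of Pucci inequalities $\mathcal{M}^-(D^2u)\leq C$ and $\mathcal{M}^+(D^2u)\geq -C$ and then claim that the interior $W^{2,p}$ estimates of Chapter 7 of \cite{CC} apply. They do not: for functions in the class $\overline{S}(\lambda,\Lambda,C)$ of viscosity sub/supersolutions of the Pucci inequalities the only available Sobolev estimate is $W^{2,\varepsilon}$ for some small universal $\varepsilon>0$, \emph{not} $W^{2,p}$ for every $p<\infty$. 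The full-range $W^{2,p}$ estimate of Theorem~7.1 in \cite{CC} requires, besides the two-sided bound on the right-hand side, the \emph{convexity} of the operator and must be stated in terms of $F$ itself: one needs $|F(D^2u)|\leq C$ in the viscosity sense. Your chain of inequalities actually proves this stronger statement --- on the contact set $F(D^2\varphi(x_0))\geq G(D^2\varphi(x_0))\geq g(x_0)\geq -\|g\|_{L^\infty}$, where $F\geq G$ follows from $G(X)=-F(-X)$, $F(0)=0$ and the convexity of $F$ --- so the fix is simply to retain the bound on $F$ instead of passing to the Pucci operators, and then cite Theorem~7.1 for the convex operator $F$. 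This is precisely what the paper does.

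There is also a direction slip on the contact set that you should not wave away. A paraboloid touching $u$ from \emph{below} at $x_0\in\{u=v\}$ does not in general touch $v$ from below: from $\varphi\leq u$ and $v\leq u$ with all three equal at $x_0$ you cannot order $\varphi$ and $v$. What is true, and what you actually need in order to invoke the subsolution property $G(D^2v)\geq g$, is that a paraboloid touching $u$ from \emph{above} at a contact point also touches $v$ from above, since $\varphi\geq u\geq v$ with equality at $x_0$. This is also the correct direction for what you are trying to prove: the lower bound $F(D^2u)\geq -C$ (equivalently, your $\mathcal{M}^+(D^2u)\geq -C$) is a subsolution statement, and viscosity subsolution inequalities are tested with functions touching from above. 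The inequality you write down is the one the corrected argument yields, so the constants come out right, but the justification as stated is backwards, and you even reiterate the incorrect claim in your closing paragraph.
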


\begin{proof}
We prove the result for $u$, the proof for $v$ is analogous. We will show that $|F(D^2u)|\leq C$ in the viscosity sense for some universal constant $C$. The result will then follow from Theorem 7.1 in
in \cite{CC} (recall that $F(\cdot)$ is a convex operator). 

Let $\varphi$ be a $C^2$ function touching $u$ by below at $x_0\in B_1$. Recall that $u$ is a viscosity supersolution across the whole ball (disregarding if $x_0$ is in the contact set or not), so we have  

\[
F(D^2\varphi(x_0))\leq f(x_0)\leq\|f\|_{L^\infty(B_1)}. 
\]

If instead $\varphi$ touches $u$ by above, we separate two cases:

\textbf{Case 1:} if $x_0\in\Omega$ then $u$ is also a subsolution and we get 

\[
F(D^2\varphi(x_0))\geq f(x_0)\geq-\|f\|_{L^\infty(B_1)}. 
\]

\textbf{Case 2:} if $x_0\notin\Omega$, notice that $\varphi$ also touches $v$ by above, and $v$ is a subsolution for $G$ across the whole ball. Then 

\[
G(D^2\varphi(x_0))\geq g(x_0)\geq-\|g\|_{L^\infty(B_1)}. 
\]

But for any symmetric matrix $X$ we have $G(X)\leq F(X)$. Thus 
\[
F(D^2\varphi(x_0))\geq g(x_0)\geq-\|g\|_{L^\infty(B_1)}
\]
and we are done.
\end{proof}

\begin{rem}
Notice that the proof is still valid if we just require $F(X)\geq G(X)$.
\end{rem} 

Now we show that when problem is given by the Pucci extremal operators solutions are $C^{1,1}$ on the contact set (i.e. they have bounded second derivatives). More precisely:

\begin{prop}\label{prop.contactset}
Let $u$ and $v$ solve \eqref{eq.main} with $F=\mathcal{M}^+$ and $G=\mathcal{M}^-$. Then $u$ and $v$ are $C^{1,1}$ in $B_{1/2}\cap\Omega^c$ and  
\[
 \|D^2u\|_{L^\infty(B_{1/2}\cap\Omega^c)}, \|D^2v\|_{L^\infty(B_{1/2}\cap\Omega^c)}\leq C
\]
for some universal constant $C$.
\end{prop}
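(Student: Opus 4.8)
The plan is to exploit the special structure $F=\mathcal{M}^+$, $G=\mathcal{M}^-$ together with the compatibility relation $G(X)=-F(-X)$ to control the second derivatives of $u$ and $v$ on the contact set $\Omega^c=\{u=v\}$. The starting point is Proposition \ref{prop.cz}: we already know $u,v\in W^{2,p}(B_{1/2})$ for every $p<\infty$ (hence in particular $D^2u$, $D^2v$ exist a.e.), and from \eqref{eq.diri} we know that across the whole ball $\mathcal{M}^+(D^2u)\le f$ and $\mathcal{M}^-(D^2u)\ge f$ (disregarding whether we are on the contact set), because $u$ is a supersolution everywhere. The one-sided bounds we still need are: a lower bound on the Hessian of $u$ from below (equivalently an upper bound on $\mathcal{M}^+(D^2u)$ from below) on $\Omega^c$, and symmetrically an upper bound on the Hessian of $v$ on $\Omega^c$.

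The key observation is that on the contact set $u$ and $v$ touch, so on $\Omega^c$ any $C^2$ function touching $u$ by above also touches $v$ by above, and any $C^2$ function touching $v$ by below also touches $u$ by below. Using that $v$ is a subsolution for $G=\mathcal{M}^-$ across the whole ball, if $\varphi$ touches $u$ by above at $x_0\in\Omega^c$ then it touches $v$ by above there, so $\mathcal{M}^-(D^2\varphi(x_0))\ge g(x_0)\ge -\|g\|_{L^\infty}$. Combined with $\mathcal{M}^+(D^2u)\le f\le\|f\|_{L^\infty}$ in the viscosity sense (valid everywhere), this says that $u$ satisfies, in a neighborhood of the contact set,
\[
\mathcal{M}^+(D^2u)\le \|f\|_{L^\infty(B_1)},\qquad \mathcal{M}^-(D^2u)\ge -\|g\|_{L^\infty(B_1)}\quad\text{on }\Omega^c
\]
in the appropriate viscosity sense; one must be a little careful because the lower bound on $\mathcal{M}^-(D^2u)$ only holds when the touching point is in $\Omega^c$. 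The honest way to phrase this is pointwise a.e.: since $u\in W^{2,p}$, at a.e. point $x_0\in\Omega^c$ the function $u$ is twice differentiable, and one shows that at such a point (being an interior point of $\Omega^c$, or handling the boundary $\partial\Omega$ separately via the $W^{2,p}$ regularity and the fact that $\Omega^c$ has the same Lebesgue-density structure) both $\mathcal{M}^+(D^2u(x_0))\le\|f\|_{L^\infty}$ and $\mathcal{M}^-(D^2u(x_0))\ge -\|g\|_{L^\infty}$ hold. Now, for a symmetric matrix $X$, the two-sided Pucci bound $\mathcal{M}^+(X)\le C_1$ and $\mathcal{M}^-(X)\ge -C_2$ forces $|X|\le C(n,\lambda,\Lambda)(C_1+C_2)$: indeed $\mathcal{M}^+(X)\ge\lambda\sum_{e_i>0}e_i-\Lambda\sum_{e_i<0}(-e_i)$ is not quite what we want, but from $\mathcal{M}^-(X)=\lambda\sum_{e_i>0}e_i+\Lambda\sum_{e_i<0}e_i\ge -C_2$ we get $\sum_{e_i<0}|e_i|\le C_2/\Lambda$, and from $\mathcal{M}^+(X)=\Lambda\sum_{e_i>0}e_i+\lambda\sum_{e_i<0}e_i\le C_1$ we get $\sum_{e_i>0}e_i\le C_1/\Lambda$, so all eigenvalues are bounded in absolute value, hence $|D^2u(x_0)|\le C$.

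For $v$ the argument is symmetric: $\mathcal{M}^-(D^2v)\ge g\ge -\|g\|_{L^\infty}$ everywhere (from \eqref{eq.diri}, since $v$ is a subsolution for $G=\mathcal{M}^-$ across the ball, actually we need $v$ a supersolution for $\mathcal{M}^-$ which follows from \eqref{eq.diri} applied to $G(D^2v)=g$ on $\Omega$ and the one-sided inequality $G(D^2v)\ge g$ elsewhere — so this needs a small check), and on $\Omega^c$ a $C^2$ function touching $v$ by below touches $u$ by below, and $u$ is a supersolution for $F=\mathcal{M}^+$ everywhere, giving $\mathcal{M}^+(D^2v(x_0))\le f(x_0)\le\|f\|_{L^\infty}$ a.e. on $\Omega^c$. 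The same eigenvalue estimate then yields $|D^2v(x_0)|\le C$ a.e. on $B_{1/2}\cap\Omega^c$. Finally one upgrades "bounded Hessian a.e. on $\Omega^c$" to the $C^{1,1}$ statement on $B_{1/2}\cap\Omega^c$; this is standard given the $W^{2,p}$ regularity and the pointwise bound. The main obstacle, and the point deserving the most care, is the transition across $\partial\Omega$: the inequality giving the lower Hessian bound for $u$ (resp. upper for $v$) is only available at touching points lying in $\Omega^c$, so one must argue that the a.e. pointwise second-order expansion of $u$ at density points of $\Omega^c$ still yields the Pucci inequality — this is exactly where the $W^{2,p}$ estimate from Proposition \ref{prop.cz} and a density/Lebesgue-point argument (test paraboloids with arbitrarily small opening at points where $\Omega^c$ has density one) do the work, and it is the crux of the proof.
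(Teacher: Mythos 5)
Your approach is essentially the paper's. The shared mechanism is: use the $W^{2,p}$ estimate from Proposition \ref{prop.cz} (and \cite{CCKS}) to reduce to a pointwise a.e.\ bound on $\Omega^c$; at a contact point $x_0$, any paraboloid $P$ touching $u$ by above also touches $v$ by above (since $P\ge u\ge v$ near $x_0$ and $P(x_0)=u(x_0)=v(x_0)$), so $\mathcal{M}^+(D^2u)\le f$ and $\mathcal{M}^-(D^2u)\ge g$ hold simultaneously at a.e.\ point of $\Omega^c$; then the Pucci identity $\mathcal{M}^+(X)-\mathcal{M}^-(X)=(\Lambda-\lambda)\sum_i|e_i|$, with $\lambda<\Lambda$ strict, gives the Hessian bound. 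The paper organizes the same estimate slightly differently --- splitting $\Omega^c$ into its interior (where $D^2u=D^2v$) and $\partial\Omega^c$ (where $D^2u\ge D^2v$ and one uses monotonicity of $\mathcal{M}^\pm$) --- but the content is identical. Two slips to correct: (i) the assertion that $\mathcal{M}^-(D^2u)\ge f$ holds ``across the whole ball'' is false; from $F(D^2u)\le f$ with $F=\mathcal{M}^+$ one only gets $\mathcal{M}^-(D^2u)\le\mathcal{M}^+(D^2u)\le f$, and the lower bound you actually need and correctly derive later is $\mathcal{M}^-(D^2u)\ge g$ on $\Omega^c$ via touching $v$. (ii) The intermediate eigenvalue bounds do not follow one at a time: $\mathcal{M}^-(X)\ge -C_2$ alone does \emph{not} give $\sum_{e_i<0}|e_i|\le C_2/\Lambda$ (the positive part enters with the wrong sign); the correct conclusion comes from subtracting, $(\Lambda-\lambda)\sum_i|e_i|=\mathcal{M}^+(X)-\mathcal{M}^-(X)\le C_1+C_2$. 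Finally, the density/Lebesgue-point worries at the end are unnecessary: the touching argument works at every $x_0\in\Omega^c$ where $u$ has a second-order expansion, whether $x_0$ is interior to $\Omega^c$ or on $\partial\Omega$, precisely because $u\ge v$ globally and $u(x_0)=v(x_0)$.
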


\begin{proof}
By Proposition \ref{prop.cz} $u$ and $v$ are $W^{2,p}$ functions in, say, $B_{3/4}$ so we only need to show an almost everywhere bound on $D^2u$ and $D^2v$ in $\Omega^c$. Also, since the notions
of viscosity solution and strong solution coincide for $W^{2,p}$ with $p\geq n$ (see \cite{CCKS}) we have that \eqref{eq.main} is satisfied a.e.

If $x$ is a point in the interior of $\Omega^c$ for which \eqref{eq.main} is satisfied, $u$ and $v$ coincide in a neighborhood of $x$ and hence, letting $e_u$ and $e_v$ denote the eigenvalues of $D^2u$ and $D^2v$ respectively, we find 
\begin{eqnarray*}
C\geq (f-g)(x) & \geq &  F(D^2u(x))-G(D^2v(x))\\
	       &   =  &  \Lambda\sum_{e_u>0}e_u+\lambda\sum_{e_u\leq 0}e_u-\lambda\sum_{e_v>0}e_v-\Lambda\sum_{e_v\leq 0}e_v \\
               &   =  &  (\Lambda-\lambda)\sum_{e_v}|e_v|
\end{eqnarray*}
and the result follows in this case. 

If $x\in\partial\Omega^c$ (again, a point at which \eqref{eq.main} holds), $u-v$ has a minimum at $x$ and hence $D^2(u-v)(x)$ is nonnegative definite, which in particular implies $\partial_{ee}u(x)\geq\partial_{ee}v(x)$ for any direction $e\in S^{n-1}$. 
Let us now pick a system of coordinates, say $\{e_1,\ldots,e_n\}$, in which $D^2v(x)$ is diagonal. Moreover let us assume without loss of generality that the first $m$ eigenvalues of $D^2v$ are nonpositive and the remainig
$n-m$ positive. Let then $A$ be a diagonal matrix with $\lambda$ in the first $m$ positions of its diagonal and $\Lambda$ otherwise. Since $A$ is a competitor in the $\sup$ and $\inf$ that define $F$ and $G$ respectively we have, using the equation, 
\begin{eqnarray*}
C\geq (f-g)(x) & \geq &  F(D^2u(x))-G(D^2v(x)) \geq \textrm{tr}(AD^2u(x))- \textrm{tr}(AD^2v(x))\\
	       & = &  \lambda \sum_{i=1}^{m} u_{e_ie_i} + \Lambda \sum_{i=m+1}^{n} u_{e_ie_i}-\Lambda\sum_{i=1}^{m} v_{e_ie_i}-\lambda\sum_{i=m+1}^n v_{e_ie_i} \\
	       & \geq &  \lambda \sum_{i=1}^{m} v_{e_ie_i} + \Lambda \sum_{i=m+1}^{n} v_{e_ie_i}- \Lambda \sum_{i=1}^{m} v_{e_ie_i} - \lambda \sum_{i=m+1}^n v_{e_ie_i}\\
	       & = &  (\lambda-\Lambda) \sum_{e_v\leq 0} e_v + (\Lambda-\lambda) \sum_{e_v>0}e_v\\
	       & = &  (\Lambda-\lambda)\sum_{e_v}|e_v|
\end{eqnarray*}
so we get the bound for $D^2v(x)$. The proof of the bounds for $D^2u(x)$ is completely analogous. 
\end{proof}

Finally, we can give the 
\begin{proof}[Proof of Theorem \ref{thm.main}]
Again, we prove the result for $u$. Notice that, due to Proposition \ref{prop.cz}, $u$ is $W^{2,n}(B_{3/4})$. Moreover, by Proposition \ref{prop.contactset} the Hessian of $u$ is bounded a.e. inside the contact set in $B_{1/2}$, and hence we have 
\begin{equation*}
  \left\{ \begin{array}{rcll}
  F(D^2u) & = & f(x) & \textrm{ in } B_{1/2}\cap\Omega \\
  |D^2u|   & \leq & C & \textrm{ in } B_{1/2}\cap\Omega^c
\end{array}\right.
\end{equation*}
and we can apply Theorem \ref{thm.indreimine} to get that $u\in C^{1,1}(B_{1/4})$ as desired.
\end{proof}

\section{Free boundary}
\label{sec.4}

The classic approach to study the regularity of the free boundary of the double membrane problem consists on substracting the two membranes (solutions), say $w:=u-v$, and reduce 
the situation to an obstacle-type problem (note that $w$ thus defined is nonnegative). One of the key steps of the analysis of the free boundary is to show that $w$ satisfies a 
non-degeneracy property around free boundary points; that is, given $x_0\in \partial\{ w>0\}$ one wants to show that 
\begin{equation} \label{nondegeneracy}
\sup _{\partial B_r(x_0)}w \geq C r^2 \textrm{ \  for \ } r>0
\end{equation}     
where $C>0$ is a universal constant. 

In the case of \eqref{eq.main} this property is not satisfied. Indeed, let $C$ be any positive constant and consider
\[
u(x,y):=x^2-y^2+Cx_+^3\quad\textrm{ and }\quad v(x,y):=x^2-y^2.
\]
Here $x_+=\max\{x,0\}$. Notice that 
\begin{equation*}
\left\{ \begin{array}{rcll}
  u & \geq  & v  & \textrm{ in }  B_1\\ 
  \mathcal{M}^+(D^2u) & = &  2(\Lambda - \lambda) + 6C\Lambda x_+    & \textrm{ in } B_1 \\
  \mathcal{M}^-(D^2v) & = & -2(\Lambda - \lambda)   & \textrm{ in } B_1 \\
\end{array}\right.
\end{equation*}
In particular $u,v$ solve \eqref{eq.main}, $0\in \partial\{ w>0\} $ and
\begin{equation} 
\sup _{\partial B_r(0)}w = C \sup _{\partial B_r(0)}x_+^3= Cr^3 < Cr^2
\end{equation}   
for any $r<1$.

In fact, by previous the following example we can see that no free boundary regularity can hold in general. If we make
\[
u(x,y):=x^2-y^2+\psi(x,y)\quad\textrm{ and }\quad v(x,y):=x^2-y^2
\]
with $\psi$ a nonnegative smooth function we can make the contact set arbitrarily bad and still get solutions of \eqref{eq.main}.

{\bf Acknowledgments}

The authors would like to thank the anonymous referees for their careful and detailed reading of this manuscript and for their suggestions and corrections. Also, Professor Sibru's comments on the probabilistic aspect of this problem were very enlightening. 

The authors were partially supported by NSF grant DMS-1540162. The second author was also partially supported by Colciencias and the third author by a Conicet scholarship.

\end{document}